\newtheorem{theorem}{Theorem}[section]
\newtheorem{lemma}[theorem]{Lemma}
\newtheorem{conjecture}[theorem]{Conjecture}
\theoremstyle{definition}
\newtheorem{definition}[theorem]{Definition}
\newtheorem{remark}[theorem]{Remark}
\newcommand{\SortNoop}[1]{}
\begin{document}

\title{Coexistence in competing first passage percolation with conversion}
\author{Thomas Finn\footnote{tjfinn@bath.ac.uk, University of Bath, Deptartment of Mathematical Sciences, supported by a scholarship from the EPSRC Centre for Doctoral Training in Statistical Applied Mathematics at Bath (SAMBa), under the project EP/L015684/1.} \quad Alexandre Stauffer\footnote{astauffer@mat.uniroma3.it, Universit\`a Roma Tre, Dipartimento di Matematica e Fisica; University of Bath, Department of Mathematical Sciences, supported by EPSRC Fellowship EP/N004566/1.}}
\date{}

\maketitle

\begin{abstract}
	We introduce a two-type first passage percolation competition model on infinite connected graphs as follows.
	Type 1 spreads through the edges of the graph at rate $1$ from a single distinguished site, while all other sites are initially vacant. 
	Once a site is occupied by type 1, it converts to type 2 at rate $\rho>0$.
	Sites occupied by type 2 then spread at rate $\lambda>0$ through vacant sites \emph{and} sites occupied by type 1, whereas type 1 can only spread through vacant sites.
	If the set of sites occupied by type 1 is non-empty at all times, we say type 1 \emph{survives}.
	In the case of a regular $d$-ary tree for $d\geq 3$, we show type 1 can survive when it is slower than type 2, provided $\rho$ is small enough.  
	This is in contrast to when the underlying graph is $\mathbb{Z}^d$, where for any $\rho>0$, type 1 dies out almost surely if $\lambda>1$.
\end{abstract}

\section{Introduction}

	Consider the following two-type first passage percolation model on an infinite connected graph $G$.
	Each site can either be occupied by type 1, type 2 or be vacant according to the following dynamics. 
	At time $0$,  a distinguished site is occupied by type 1 while every other site is vacant. 
	Sites occupied by type 1 attempt to occupy neighbouring vacant sites at rate 1. 
	Once a site is occupied by type 1, it is converted to type 2 at rate $\rho>0$. 
	That is, we define the collection of random variables $\left\{\mathcal{I}_x\right\}_{x\in G}$ of conversion times, that are i.i.d.\ exponential random variables of rate $\rho>0$, assigned to each site of $G$.
	Once a site is occupied by type 1, it waits for its respective conversion time to expire before converting to type 2. 
	Sites occupied by type 2 then spread type 2 to vacant sites \emph{and} sites occupied by type 1 at rate $\lambda>0$.
	
	This model can be seen as a variant of the chase-escape dynamics in predator-prey models (see Section~\ref{sec:related_work} for more details).
	In these models, there is initially a single predator that evolves to block the spread of a species of prey.
	A natural interpretation for our model is as a spreading infection where individuals are either aware or unaware of their infected status. 
	Unaware individuals spread the infection to nearby individuals but become aware of their infected status after a certain time elapses.
	Aware individuals try to warn neighbouring individuals which become aware of the spread of infection even if it it has not been infected. 
	Uninfected individuals that are aware take the necessary measures (for example, self-isolating) to avoid infection and do not get infected.
	Can the unaware individuals coexist with aware individuals for all time?
	Equivalently, can the infection reach an unbounded number of individuals?
	
	Another motivation for us to introduce this model comes from a recent way of analysing strongly interacting particle systems through growth models; for example, the analysis of multiparticle diffusion limited 
	aggregation by Sidoravicius and Stauffer~\cite{sidoravicius2019multi}
	and the analysis of a heterogeneous spread of infection model by Dauvergne and Sly~\cite{dauvergne2021spread}. 
	We believe the competition process we introduce is a natural model for such applications; we discuss this further in Section~\ref{sec:related_work}. 
	
	The main interest of this paper is understanding coexistence regimes in this model on different graphs.
	We prove on the regular tree, type 1 can survive even if it is slower than type $2$ (i.e.\ $\lambda$ is larger than one), so long as $\rho$ is sufficiently small (c.f.\ Theorem~\ref{thm:crit_lambda_tree}).
	Then, we prove such behaviour on the lattice is impossible and type 1 dies out even if it is just faster than type 2, for all $\rho>0$ (c.f.\ Theorem~\ref{thm:lattice}).
	
	A major difficulty in analysing this model is the counter-intuitive lack of monotonicity.
	If we increase $\rho$ or $\lambda$, it seems we can only decrease the probability that type 1 survives.
	Surprisingly, proving this remains an open problem.  
	The issue is the model is non-monotone in the sense that the standard coupling argument fails to hold (unlike other competition models like the two-type Richardson model as discussed in Section~\ref{sec:related_work}).
	Models that lack monotonicity require a careful analysis as they include the possibility of many phase transitions occurring. 
	The subtle behaviour of processes that lack monotonicity has been studied in related models in Candellero and Stauffer \cite{candellero2020} and Deijfen and H\"aggstr\"om \cite{def}.
	Another difficulty the model poses is non-equilibrium dynamics and long-range correlations between the occupancy of sites.
	For example, to determine whether a site is ever occupied by type 1, one may need non-local information about the first times other sites are occupied by type 1 and how type 2 spreads from them.
	
\subsection{Our results}	
\label{sec:results}
	The main result of this paper is the following.
	On the $d$-ary tree for $d\geq3$ (i.e.\ the infinite tree where all vertices have \emph{degree} equal to $d$), we show type 1 can survive even in a regime where it is slower than type 2.
	
\begin{theorem}
\label{thm:crit_lambda_tree}
	Fix $d\geq3$ and consider the $d$-ary tree.
	There exists $\lambda_0=\lambda_0(d)>1$ such that if $\lambda\in(0,\lambda_0)$ and $\rho$ is small enough, then type 1 survives with positive probability.
\end{theorem}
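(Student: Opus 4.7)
The plan is to track the \emph{slack} $\delta_v := \tau'_v - \tau_v$ at each vertex $v$, where $\tau_v, \tau'_v$ are the first times $v$ is occupied by type 1 and type 2, respectively; type 1 reaches $v$ iff $\delta_v > 0$. An analysis of the race on the edge from the parent $v^-$ to $v$ gives the recursion
\begin{equation*}
\delta_v = \delta_{v^-} + S_{v^-,v} - T_{v^-,v},
\end{equation*}
provided type 2 reaches $v$ via $v^-$ (the main contribution); in general $\delta_v$ is the minimum of the above with the conversion time $\mathcal{I}_v \sim \mathrm{Exp}(\rho)$ and with sibling/child back-spreading contributions. With $T \sim \mathrm{Exp}(1)$ and $S \sim \mathrm{Exp}(\lambda)$ iid across edges and $d-1$ children per vertex, $(\delta_v)$ is a branching process on the tree, and type 1 survives iff this process survives absorption at $0$.

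My first step is to show that, for $\rho$ small, the slack process coincides with a purely additive branching random walk (BRW) on an initial block of the tree. For any fixed block of depth $L$ around the root and any threshold $M$, the event $A_{L,M}$ that $\mathcal{I}_u > M$ for every $u$ in this block has probability tending to $1$ as $\rho \to 0$. On $A_{L,M}$, as long as the slack stays below $M$, both the conversion and back-spreading contributions to the minimum exceed the $\delta_{v^-} + S - T$ term, and the recursion reduces exactly to the additive BRW $\tilde\delta_v = \tilde\delta_{v^-} + (S-T)$ with $d-1$ offspring per vertex and iid increment $X = S - T \sim \mathrm{Exp}(\lambda) - \mathrm{Exp}(1)$.

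Next, I invoke Biggins' formula for the speed of the maximum of $\tilde\delta$:
\begin{equation*}
v_{\max}(\lambda) = \inf_{\theta \in (0,\lambda)} \frac{1}{\theta}\log\!\left(\frac{(d-1)\lambda}{(\lambda-\theta)(1+\theta)}\right).
\end{equation*}
At $\lambda = 1$ the argument of the logarithm equals $(d-1)/(1-\theta^2) \geq d-1 \geq 2$ uniformly in $\theta \in (0,1)$, so $v_{\max}(1) > 0$. By continuity in $\lambda$ there exists $\lambda_0 = \lambda_0(d) > 1$ with $v_{\max}(\lambda) > 0$ for all $\lambda \in (0, \lambda_0)$; minimising explicitly at $\theta^* = (\lambda-1)/2$ yields the closed form $\lambda_0 = (2d-3) + 2\sqrt{(d-1)(d-2)}$. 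With $v_{\max} > 0$, standard estimates for BRW with a barrier imply that from any initial value $\delta_0 \in (0, M)$ the additive BRW produces, with positive probability bounded away from $0$ in $\delta_0$, at least two descendants at depth $L$ whose slack remains above $\delta_0$ throughout the ancestral path. Taking the initial slack at a child of the root, which dominates $\mathcal{I}_{\mathrm{root}} + S - T$ and exceeds $\delta_0$ with positive probability for $\rho$ small, the BRW conclusion transfers on $A_{L,M}$ to the true slack process, giving at least two good vertices at depth $L$.

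Iterating from each good vertex sets up a supercritical Galton--Watson process of "good" vertices at scale $L$, whose survival implies survival of the true slack process and hence of type 1. The main technical obstacle is controlling the interplay between the block scale $L$, the threshold $M$, and the small parameter $\rho$: the additive BRW comparison fails once the slack approaches $M$ and the cap imposed by the $\mathcal{I}_u$'s (or by sibling back-spreading) activates, so the renormalisation must guarantee a uniform per-block survival probability and at least two good descendants in expectation, independent of depth, together with a choice of $L(\rho), M(\rho) \to \infty$ as $\rho \to 0$ that keeps the slack safely below the cap within each block.
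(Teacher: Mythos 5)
Your core mechanism --- finding paths along which the cumulative slack $\sum_e (S_e-T_e)$ grows linearly, which is possible for some $\lambda_0(d)>1$ because of the entropy term $\log(d-1)$ in the BRW rate function --- is sound and is essentially the same phenomenon the paper exploits via Addario-Berry--Reed (the paper controls $T_1\le(1-\varepsilon)k$ and $T_d\ge(1-\varepsilon^2)k/\lambda$ separately along ``highways'', while you fold both into one increment; your version would even give an explicit candidate $\lambda_0$). But there is a genuine gap exactly where you defer to ``the main technical obstacle'': the contribution of type 2 created by conversions \emph{outside} the current block, in particular behind the front and in sibling subtrees hanging off the chosen path. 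Your event $A_{L,M}$ only forbids conversions inside one block, and no choice of $L(\rho),M(\rho)\to\infty$ can help with later blocks: for any fixed $\rho>0$, once the front is at depth $jL$ the sites in earlier blocks have been type 1 for time of order $jL\gg 1/\rho$, so conversions behind the front are certain to occur, and there are exponentially many off-path type-1 sites from which type 2 can spread \emph{upwards}, re-enter the chosen path at an ancestor, and race down. Ruling this out requires a union bound over atypically fast upward type-2 passage times among exponentially many sources, plus an insulation mechanism at block boundaries (a conversion just below a block boundary sits at distance $O(1)$ from the path). This is precisely what the paper's spine construction (incident edges with $T_u,T_d\ge k^3$), the events $D^*(x)$ of Lemma~\ref{lem:backtracks}, the no-conversion event $\mathcal{G}_4$, and the progenitor case analysis in the proof of Theorem~\ref{thm:crit_lambda_tree} are built to handle; your proposal asserts on $A_{L,M}$ that ``back-spreading contributions exceed the $\delta_{v^-}+S-T$ term'' but supplies no argument, and the positive BRW speed alone does not give it, because the slack bound along the path says nothing about type 2 arriving via short upward detours from freshly converted off-path sites near the path.

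Two secondary points. First, your renormalisation needs a supercritical Galton--Watson process, so ``at least two good descendants with probability bounded away from $0$'' is not enough --- you need that probability above $1/2$ (or many descendants with not-too-small probability); the paper arranges $\mathbb{P}(B\mbox{ good})>1/2$ by taking $k$ large before taking $\rho$ small, and the goodness of a box is measurable with respect to that box alone so that the boxes branch independently --- your ``good vertex'' events, as described, depend on back-spreading randomness that is not confined to the subtree and so are not obviously independent across siblings. Second, the recursion $\delta_v=\delta_{v^-}+S-T$ requires not just $\delta_{v^-}>0$ but $\delta_{v^-}>t_{1,v^-v}$ (type 1 must still occupy $v^-$ when its clock fires), so the correct comparison is a BRW killed below a moving, edge-dependent barrier rather than below $0$; this is repairable but needs to be stated and handled in the block estimates.
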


	Recalling the interpretation of the model as the spread of infection and awareness, we deduce the following from Theorem~\ref{thm:crit_lambda_tree}.
	In the case of the $d$-ary tree, the infection can survive even if awareness spreads faster than the infection provided infected individuals do not become aware until a typically large time has passed.

\begin{remark}
	Theorem~\ref{thm:crit_lambda_tree} may naturally be extended to all supercritical Galton-Watson trees of uniformly bounded degree. 
	The requirement of uniformly bounded degrees is needed to appeal to a result in branching random walks by Addario-Berry and Reed \cite{addario2009minima} (see Theorem~\ref{thm:BRW}). 
	The behaviour of the model on more general trees remains an open problem.
	For example, if the degree distribution follows a power law, can type 1 survive for all $\lambda\in(0,\infty)$?
\end{remark}
	
	Our next result is establishing the behaviour on the regular tree is fundamentally different than on the lattice.
	More precisely, on $\mathbb{Z}^d$ for $d\geq2$,  for all $\rho>0$, there exists a constant \emph{smaller} than one such that if $\lambda$ is larger than this constant, type 1 dies out almost surely.
	In other words, there exists a regime where type 1 is faster than type 2 but still dies out almost surely.
	We note it is immediate for $d=1$ that type 1 dies out almost surely for all $\lambda,\rho>0$.

\begin{theorem}
\label{thm:lattice}
	Consider the model on $\mathbb{Z}^d$ for $d\geq2$.
	For all $\rho>0$, there exists $\lambda'=\lambda'(\rho,d)<1$ such that if $\lambda>\lambda'$, type 1 dies out almost surely.
\end{theorem}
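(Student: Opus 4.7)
\textbf{Proof plan for Theorem~\ref{thm:lattice}.}

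The strategy is a first-passage percolation (FPP) comparison refined by a bulk-conversion argument. Let $T(x)$ denote the rate-$1$ FPP time from the origin to $x$ on $\mathbb{Z}^d$, which is an upper bound on the first time $x$ is occupied by type $1$, and let $T^{\lambda}(y,x)$ denote the rate-$\lambda$ FPP time from $y$ to $x$, built on an independent family of exponential edge weights. The first time $S(x)$ that type $2$ reaches $x$ in the actual process satisfies
\[
S(x) \;\leq\; \min_{y \in \mathbb{Z}^d}\Big[T(y) + \mathcal{I}_y + T^{\lambda}(y,x)\Big],
\]
since any site $y$ becomes type $2$ by time $T(y) + \mathcal{I}_y$ at the latest, after which type $2$ spreads at rate $\lambda$. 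Extinction of type $1$ will follow from showing $S(x) < T(x)$ almost surely for all sufficiently distant $x$.

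For $\lambda > 1$ the bound above is already enough. By Kesten's shape theorem $T(y)\approx \mu|y|$ and $T^{\lambda}(y,x)\approx \mu|y-x|/\lambda$; taking $y$ on the $\ell^1$-geodesic from $0$ to $x$ with $|y|+|y-x|=|x|$, the right-hand side is of order $\mu|x|(1-s(1-1/\lambda))$, strictly less than $T(x)\approx \mu|x|$. Minimising over the many candidate seeds $y$ absorbs the additive $\mathcal{I}_y$ term and produces extinction for every $\lambda>1$ and $\rho>0$.

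For $\lambda$ slightly below $1$ the naive minimiser in the RHS is $y=x$, giving only $S(x)\approx T(x)+\mathcal{I}_x>T(x)$. To push the threshold strictly below $1$ I would exploit two effects specific to $\mathbb{Z}^d$. First, the abundance of bulk conversions: by time $t$ the type $1$ region has volume of polynomial order $t^d$, producing $\Theta(\rho t^d)$ type $2$ seeds, far more than on a tree; minimising the FPP bound over this large, roughly independent collection of seeds improves on the pointwise estimate. Second, the effective speed of type $1$ is reduced below its rate-$1$ FPP speed because frontier type $1$ sites convert to type $2$ and vacant sites adjacent to type $2$ are then only captured at the slower rate $\lambda$. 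Combining these two effects through a multiscale/renormalisation scheme at scales $L_n=2^n$, one shows that with probability bounded away from zero at each scale, type $2$ surrounds the type $1$ frontier in every outward direction within $B_{L_n}$, so that a Borel--Cantelli / site-percolation comparison forces almost-sure extinction.

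The main obstacle is obtaining the strict inequality $\lambda'<1$. Each of the bulk-seed speed-up and the frontier slowdown alone gives only $\lambda'=1$, and the known lack of monotonicity precludes a clean coupling reduction. The technical crux is therefore combining both effects in a quantitative renormalisation; this is made possible on $\mathbb{Z}^d$ for $d\geq 2$ by the polynomial (rather than exponential) boundary geometry, which forces type $1$ to live in a thin shell whose interior is saturated by type $2$, and which is precisely the geometric feature that distinguishes Theorem~\ref{thm:lattice} from the tree result in Theorem~\ref{thm:crit_lambda_tree}.
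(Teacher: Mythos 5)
Your plan does not yet contain a proof of the one thing the theorem asserts, namely $\lambda'<1$: you defer it to an unspecified ``multiscale/renormalisation scheme combining two effects,'' and the premise guiding that scheme is mistaken. You claim the frontier slowdown of type 1 caused by conversions ``alone gives only $\lambda'=1$,'' but this single effect is exactly what produces $\lambda'<1$, with no renormalisation at all, once it is made quantitative with the right tool. A site whose conversion time is smaller than all of its incident type-1 passage times can never transmit type 1, so the effective passage times governing the spread of type 1 strictly stochastically dominate Exp$(1)$; the strict monotonicity theorem of van den Berg and Kesten \cite{van1993inequalities} then shrinks the limit shape strictly, giving $\eta_1(t)\subset(1+\varepsilon)(1-\kappa)t\mathcal{B}_1$ eventually for some $\kappa(\rho,d)>0$ (this is Lemma~\ref{lem:vdbK}; the only technical care is decoupling the conversion times from the passage times, handled by marking sites with large conversion times and a Liggett--Schonmann--Stacey domination \cite{liggett1997domination}). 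On the other side, your ``bulk seeds'' effect is a red herring for this theorem: since type 2 invades type-1 sites as well as vacant ones, once the origin converts (at an a.s.\ finite time) type 2 spreads exactly as an unrestricted rate-$\lambda$ first passage percolation, so by Theorem~\ref{thm:shape} it eventually contains $(1-\varepsilon)\lambda t\mathcal{B}_1$; additional seeds cannot improve this linear speed. Comparing the two shapes kills type 1 for every $\lambda>1-\kappa$, which is the whole proof. Without the van den Berg--Kesten input (or some substitute giving a \emph{strict} slowdown of type 1), your outline establishes at best the $\lambda>1$ case.

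Two further points on the steps you do write down. Your basic bound $S(x)\leq\min_y\bigl[T(y)+\mathcal{I}_y+T^{\lambda}(y,x)\bigr]$ is not justified: the rate-$1$ FPP time $T(y)$ is a \emph{lower} bound on $\tau_1(y)$ (when finite), not an upper bound, since type 1 may be blocked along the rate-$1$ geodesic and forced onto a slower path or stopped altogether, and if it is blocked and $\lambda<1$ then $y$ may become type 2 much later than $T(y)+\mathcal{I}_y$. The choice $y=0$ is the only one you need and the only one that is clearly valid ($S(x)\leq\mathcal{I}_0+T^{\lambda}(0,x)$ up to the memorylessness of re-attempted crossings), and with it the $\lambda>1$ case follows from the shape theorem directly, without intermediate seeds. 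Finally, note that the direction you need for extinction, $T(x)\leq\tau_1(x)$, is the opposite of the one you state in your first sentence.
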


	The fact that for any $\rho>0$, type 1 dies out almost surely for all $\lambda>1$ is a simple consequence of the classical \emph{shape theorem} for first passage percolation on $\mathbb{Z}^d$ (see Theorem~\ref{thm:shape}).
	Hence, on $\mathbb{Z}^d$, the infection cannot survive forever if awareness spreads faster than the infection.
	Theorem~\ref{thm:lattice} sharpens this result so that for any conversion rate, unaware individuals cannot survive forever if the infection spreads only just faster than awareness.
	
	A complete picture of the survival regimes on $\mathbb{Z}^d$ is an open problem but the expected behaviour is given in the following conjecture.

\begin{conjecture}
\label{conj:lattice}
Consider the model on $\mathbb{Z}^d$ for $d\geq2$.
\begin{itemize}
\item For all $\lambda<1$, if $\rho$ is sufficiently small, type 1 survives with positive probability.
\item There exists $\rho_c=\rho_c(d)\in(0,\infty)$ such that:
\begin{enumerate}
\item if $\rho<\rho_c$ and $\lambda$ is sufficiently small, type 1 survives with positive probability.
\item if $\rho>\rho_c$ and $\lambda>0$, then type 1 dies out almost surely.
\end{enumerate}
\end{itemize}
\end{conjecture}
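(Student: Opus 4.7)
My plan is to set
\[
\rho_c := \inf\bigl\{\rho>0 : \text{type 1 dies a.s.\ for every }\lambda>0\bigr\},
\]
and to treat the three items separately. For the first bullet ($\lambda<1$, $\rho$ small) the engine is the shape theorem (Theorem~\ref{thm:shape}): unit-rate type-1 FPP advances with some positive limiting speed $v_d$, whereas any type-2 blob, once created, advances only with speed $\lambda v_d<v_d$, so a single conversion cannot overtake the type-1 front in any direction. To handle many conversions, I would fix a narrow cone around some direction, say $e_1$, tile it with boxes of side $L$, and declare a box \emph{good} if, during the time window in which the type-1 front is inside it, (i) no conversion event in the box has had time to produce a type-2 blob large enough to occlude the exit face, and (ii) no type-2 blob from earlier boxes has caught up. Item~(ii) has high probability by the shape theorem together with the strict speed gap $v_d-\lambda v_d$; item~(i) has high probability once $\rho$ is chosen small after $L$. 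A quantitative shape theorem (large-deviation bounds for the front) would then allow comparison with supercritical oriented percolation, on whose infinite cluster type 1 survives.

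The second bullet, item~(1) ($\rho<\rho_c$, $\lambda$ small), would be handled by the same renormalisation, except the small parameter is now $\lambda$: each type-2 blob advances arbitrarily slowly, so on any fixed timescale the process reduces to local conversions, and for $\rho$ below threshold the local type-1 dynamics should dominate a supercritical branching random walk. Item~(2), that $\rho_c<\infty$, comes from discarding type 2 altogether, which only helps type 1: the set ever occupied by type 1 is stochastically dominated by the spatial process in which each site lives for an $\mathrm{Exp}(\rho)$ time and during that time attempts to spread to each of its $2d-1$ non-parent neighbours at rate $1$; this is in turn dominated by a Galton--Watson tree with mean offspring $(2d-1)/(1+\rho)$, subcritical as soon as $\rho>2d-2$, whence type 1 dies a.s.\ for every $\lambda>0$.

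The main obstacle is twofold. First, the renormalisation must cope with long-range temporal correlations: a conversion deep inside the type-1 ball produces a type-2 blob that only reaches the front much later, possibly inside a different box, so the ``goodness'' of distant boxes is very far from independent. Controlling these correlations will presumably require a multi-scale coupling in the spirit of \cite{sidoravicius2019multi} or \cite{dauvergne2021spread}, combined with a quantitative shape theorem for FPP with moving holes carved out by the type-2 blobs. Second, framing $\rho_c$ as a sharp threshold relies on a monotonicity of survival in $\rho$ that is itself open; absent that monotonicity, establishing both $\rho_c>0$ and its $\lambda$-independence appears to require direct couplings between the dynamics at arbitrary pairs $(\rho,\lambda)$, and this non-monotonicity seems to me the true obstruction to resolving the conjecture.
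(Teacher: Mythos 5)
First, note that the statement you are addressing is Conjecture~\ref{conj:lattice}: the paper does not prove it, and your proposal does not either, so what you have written cannot be assessed as a correct proof of the statement. The genuine gaps are exactly the ones you half-concede at the end, and they are not technical afterthoughts but the entire content of the conjecture. Defining $\rho_c$ as $\inf\{\rho:\text{type 1 dies a.s.\ for every }\lambda\}$ gives no dichotomy: without monotonicity in $\rho$ (open, as the paper stresses) the set $\{\rho:\text{dies for all }\lambda\}$ need not be an interval, so item~(2) does not follow for all $\rho>\rho_c$; and for $\rho<\rho_c$ you only get survival for \emph{some} $\lambda$, not for all sufficiently small $\lambda$ as item~(1) requires, since monotonicity in $\lambda$ is equally unavailable. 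For the first bullet, the heuristic ``a type-2 blob started by one conversion travels at speed $\lambda v_d<v_d$'' ignores that type~2 is re-seeded by fresh conversions arbitrarily close to the advancing type-1 front and spreads \emph{through} type-1 sites, so the comparison is not with a single slower FPP ball; the box renormalisation you sketch (goodness of boxes, oriented-percolation comparison) is precisely the multi-scale encapsulation argument that the paper says should be adaptable from Sidoravicius--Stauffer ``with some caveats'' and leaves open. In short, the only part of your proposal that is a complete argument is the large-$\rho$ statement, which is weaker than the conjectured sharp threshold.

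That last part is worth comparing with what the paper actually proves (Theorem~\ref{thm:rho}). For $\rho$ large the paper declares a site closed when all incident type-1 passage times exceed its conversion time, observes this is a $1$-dependent field, and uses Liggett--Schonmann--Stacey to dominate supercritical Bernoulli percolation, so the origin is surrounded by closed sites. Your route is different and also works: discarding type~2 only helps type~1 (this is a valid comparison because you remove all blocking rather than tune a rate, but it deserves an explicit induction along transmission chains given the paper's warnings about non-monotonicity), and then the transmission tree of the conversion-constrained FPP is dominated, via a standard exploration, by a Galton--Watson tree with mean offspring $(2d-1)/(1+\rho)$, giving almost sure extinction for every $\lambda>0$ once $\rho>2d-2$ --- an explicit bound the paper's LSS argument does not provide. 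By contrast, for the small-$\rho$, small-$\lambda$ regime the paper does not renormalise from scratch: it couples the model with Sidoravicius--Stauffer percolation and invokes the Dauvergne--Sly encapsulation theorem (Theorem~\ref{thm:enc_SS}), which is exactly the kind of imported multi-scale machinery your sketch would ultimately need, and even that yields only item~(1) for small $\rho$, not the first bullet for all $\lambda<1$.
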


	The first half of Conjecture~\ref{conj:lattice} is closely related to the strong survival phase for first passage percolation in a hostile environment, studied in Sidoravicius and Stauffer \cite{sidoravicius2019multi} (see Section~\ref{sec:related_work}).
	We believe the proof in our case will be similar to the encapsulation arguments in \cite{sidoravicius2019multi}, though there are some caveats in applying that argument directly.
	
	A priori there is no reason $\rho_c$ is even well-defined in Conjecture~\ref{conj:lattice} due to the non-monotone nature of the model.
	Indeed, several phase transitions may occur.
	We provide a partial answer to Conjecture~\ref{conj:lattice}.
	The following result gives the expected subcritical and supercritical behaviour holds so long as $\rho$ is sufficiently small or sufficiently large, respectively.
	
\begin{theorem}
\label{thm:rho}
Consider the model on $\mathbb{Z}^d$ for $d\geq2$.
\begin{itemize}
\item There exists $\rho_{\ell}=\rho_{\ell}(d)>0$ such that if $\rho<\rho_{\ell}$ and $\lambda$ is sufficiently small, type 1 survives with positive probability.
\item There exists $\rho_{u}=\rho_{u}(d)<\infty$ such that if $\rho>\rho_{u}$, type 1 dies out almost surely for all $\lambda>0$.
\end{itemize}
\end{theorem}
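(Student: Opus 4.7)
We treat the two bullets of Theorem~\ref{thm:rho} by different means.

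\textbf{The supercritical regime.} For the second bullet, the type~1 cluster is dominated by a Galton--Watson tree uniformly in $\lambda$. Explore the cluster breadth-first from the origin; when a vertex $x$ is first painted type~1, attach an independent rate-$1$ clock to each of its at most $2d-1$ non-parent edges and recall the independent rate-$\rho$ conversion clock $\mathcal{I}_x$. Declare a non-parent neighbour $y$ a \emph{child} of $x$ whenever the $xy$-clock rings before $\mathcal{I}_x$, which has probability $1/(1+\rho)$. The number of new type~1 vertices $x$ actually produces is at most this count, since a firing may target an already non-vacant vertex. Hence the type~1 cluster is dominated by a Galton--Watson process whose non-root offspring mean is $(2d-1)/(1+\rho)$. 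Taking $\rho_u:=2d-2$ makes this mean strictly less than one, so the dominating tree -- and therefore the type~1 cluster -- is almost surely finite for every $\lambda>0$.

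\textbf{The subcritical regime.} For the first bullet, the $\lambda=0$ case already illustrates the mechanism: converted vertices simply stop spreading, and the directed edge $xy$ is used by type~1 iff its rate-$1$ spread clock fires before $\mathcal{I}_x$, with marginal probability $1/(1+\rho)$. Conditional on the conversion clocks these edge events are independent, so conditioning on the high-probability event that a positive density of conversion clocks is atypically large yields a supercritical oriented percolation comparison for $\rho$ small, and hence positive-probability survival. To upgrade to positive $\lambda$ we run a multi-scale encapsulation argument adapted from \cite{sidoravicius2019multi}. Fix a geometrically growing scale sequence $L_k$ and declare a box at scale $k$ \emph{good} if type~1 crosses it while every type~2 island nucleated inside the box stays within an enlarged sub-box of scale $O(L_{k-1})$ before being engulfed by type~1. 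The base case uses the shape theorem (Theorem~\ref{thm:shape}) for type~1 together with the fact that type~2 propagates at front speed proportional to $\lambda$: for $\rho,\lambda$ small, the number of conversions in a scale-$L_0$ box is tight and each resulting island has diameter $o(L_0)$ with overwhelming probability. The renormalization then propagates goodness from scale $k$ to $k+1$ with probability tending to one, and the origin lies in an infinite good cluster with positive probability.

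\textbf{Main obstacle.} The bottleneck is the inductive propagation of goodness. Because of the lack of monotonicity flagged after Theorem~\ref{thm:lattice}, we cannot couple scales via a monotone comparison; instead we must isolate the randomness used at each scale by reading the conversion and spreading clocks only on disjoint space-time slabs, via stopping-time arguments on the Poisson conversion processes, so that the scale-$(k+1)$ analysis is independent of what was exposed at scale $k$. Executing this while leaving ample room for the encapsulation around each of the $O(L_{k+1}^d)$ candidate conversion sites -- including accounting for the fact that a fresh conversion can, in principle, accelerate an already nucleated type~2 island and move it off-scale -- is where the quantitative balance between $\rho$ and $\lambda$ enters, and is what forces $\lambda$ to be chosen small as a function of $\rho$ rather than uniformly.
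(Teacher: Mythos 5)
Your Galton--Watson domination is correct and is a genuinely different, more elementary route than the paper's. The key observation you use is sound: a site $y$ can ever receive type~1 from a neighbour $x$ only if $t_{1,xy}<\mathcal{I}_x$, an event of probability $1/(1+\rho)$, uniformly in $\lambda$; since only finitely many sites are ever type~1 on the subcritical event and each converts after an a.s.\ finite time, type~1 dies out. One point to make precise: in the breadth-first exploration an edge between the vertex being explored and an \emph{already explored} vertex has a revealed clock, so the offspring variables are not literally i.i.d.; this is harmless because such edges cannot add new vertices (bound the new children by edges to unexplored vertices, whose clocks together with $\mathcal{I}_x$ are fresh), or one can avoid the issue entirely by a first-moment count over directed self-avoiding paths, along which all edge clocks and tail conversion times are distinct and hence independent. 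Your argument even yields the explicit bound $\rho_u\leq 2d-2$. The paper instead declares a site \emph{closed} when $\min_{y\sim x}t_{1,xy}>\mathcal{I}_x$, notes this is a $1$-dependent field with probability tending to $1$ as $\rho\to\infty$, and uses Liggett--Schonmann--Stacey \cite{liggett1997domination} to dominate supercritical Bernoulli percolation, so the origin is surrounded by a closed blocking set; both proofs are valid, yours is lighter and quantitative, the paper's is softer.

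\textbf{First bullet ($\rho_\ell>0$): genuine gap.} Here your proposal is a plan, not a proof. The $\lambda=0$ warm-up is fine, but the whole difficulty is the upgrade to $\lambda>0$, and you propose to carry out a multi-scale encapsulation ``adapted from \cite{sidoravicius2019multi}'' while explicitly leaving the inductive propagation of goodness --- isolating the randomness across scales, handling interactions between nucleated type-2 islands, and coping with the model's lack of monotonicity --- as an unresolved ``main obstacle.'' That inductive step is the entire technical content of such arguments, and the paper itself warns (after Conjecture~\ref{conj:lattice}) that transplanting the encapsulation of \cite{sidoravicius2019multi} has caveats precisely because of non-monotonicity. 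The paper's proof avoids doing any multi-scale work: it introduces a third family of passage times $\{t_{3,e}\}$ (so that a type-2 attempt which is pre-empted by a type-1 occupation is resampled, decoupling the ways type~2 can block type~1), declares a site a \emph{type 2-seed} when any incident clock is atypical (some $t_{1,xy}\geq C$, some $t_{2,xy}<C^2$ or $t_{3,xy}<C^2$, or $\mathcal{I}_x<C^2$), observes that away from seeds type~1 always wins locally, notes the seed field is $1$-dependent and hence dominated by a Bernoulli field of arbitrarily small parameter via \cite{liggett1997domination} after choosing $C$ large and then $\lambda,\rho$ small, and finally invokes the Dauvergne--Sly encapsulation theorem for Sidoravicius--Stauffer percolation (Theorem~\ref{thm:enc_SS}) as a black box to conclude that the red (type-1) process survives with positive probability. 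To complete your argument you would either have to execute the full renormalisation induction you sketch (a substantial, many-page undertaking, and not obviously routine given the non-monotonicity you yourself flag), or replace it by a coupling with an existing encapsulation result as the paper does; as written, the first bullet is not established.
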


	Proving there exists $\rho_{\ell}>0$ in Theorem ~\ref{thm:rho} relies on a careful coupling with a percolation process analysed by Dauvergne and Sly \cite{dauvergne2021spread}.
	The proof for the existence of $\rho_{u}<\infty$ follows through a direct comparison with Bernoulli percolation.
	Details for both these proofs are given in Section~\ref{sec:lattice}.


\subsection{Related work}	
\label{sec:related_work}

	The model introduced in this paper is closely related to \emph{chase-escape dynamics in the predator-prey model}.
	In the language of our model, the predator-prey dynamics can be viewed in the following manner.
	At time 0, the origin is occupied by type 2 and a neighbour is occupied by type 1. 
	Type 1 spreads at rate $\lambda>0$ to unoccupied sites while type 2 spreads at rate $1$ to both unoccupied sites and sites occupied by type 1.
	Type 1 can be seen as \emph{prey} while type 2 is the \emph{predator}.
	Note we have swapped the role of $\lambda$ to the rate of type 1 to remain consistent with the notation in the predator-prey literature.
	The model described above was introduced by Kordzakhia \cite{kordzakhia2005escape}, who proved a phase transition occurs and computed the exact critical value in the case of the $d$-ary tree.
	More precisely, there exists $\lambda_c>0$ such that if $\lambda>\lambda_c$, type 1 survives with positive probability while if $\lambda<\lambda_c$, type dies out almost surely.
	Bordenave \cite{bordenave2014extinction} extended the results by Kordzakhia to Galton-Watson trees and proved type 1 dies out almost surely at criticality. 
	We also direct the reader to Kortchemski \cite{kortchemski2016predator} and references therein for more information about predator-prey dynamics.

	The question of coexistence in competing random processes on $\mathbb{Z}^d$ has attracted significant attention in recent years.
	The \emph{two-type Richardson model} was introduced by H\"aggstr\"om and Pemantle~\cite{haggstrom1998first} as a model for competing first passage percolation.
	In this model, each type starts from a finite set of sites and spreads at rate $1$ and $\lambda$, respectively. so that once a site is occupied by one process, it remains occupied by that process henceforth. 
	They conjecture coexistence occurs with positive probability if and only if $\lambda=1$, assuming non-degenerate initial conditions.
	See \cite{deijfen2008pleasures} for a background in coexistence regimes in the two-type Richardson model and progress towards this conjecture.

	A more recent competition model that has been studied is \emph{first passage percolation in a hostile environment} (FPPHE), introduced by Sidoravicius and Stauffer \cite{sidoravicius2019multi}.
	The first type in FPPHE initially only occupied the origin while the second is dormant in \emph{seeds} that are distributed as a product of Bernoulli measures of parameter $p$.
	Type 1 spreads at rate 1 through edges and when it encounters a seed, the occupancy is suppressed and the seed is activated.
	Activated seeds then spread at rate $\lambda>0$ to vacant sites.
	Once a site is occupied by either type, it remains that type henceforth.	
	
	Sidoravicius and Stauffer \cite{sidoravicius2019multi} considered FPPHE on $\mathbb{Z}^d$ for $d>1$.
	 They proved for all $\lambda<1$, if $p$ is sufficiently small, then type 1 survives and all components of type 2 are bounded with positive probability - a regime called strong survival.
	 Finn and Stauffer \cite{finn2020non} proved a regime of coexistence exists on $\mathbb{Z}^d$ for $d>2$, in which both types occupy unbounded connected regions.
	 Coexistence on transitive, hyperbolic, non-amenable graphs has also been established in Candellero and Stauffer \cite{candellero2018coexistence}.
	
	FPPHE has been used as an analytical tool in Sidoravicius and Stauffer \cite{sidoravicius2019multi} to analyse a challenging aggregation model called \emph{multiparticle diffusion limited aggregation}. 
	A streamlined version of FPPHE, called Sidoravicius--Stauffer percolation (SSP), was utilised in Dauvergne and Sly \cite{dauvergne2021spread} to study a non-homogeneous spread of infection model (see Section~\ref{sec:SSP}).
	In the above works, coupling the evolution of the desired process with FPPHE or SSP allowed an efficient way to analyze a process with non-equilibrium dynamics without the need to carry out an involved 
	multi-scale analysis from scratch. 	

	We believe our competition process can also be used in this regard. The main idea is that type~1 represents the \emph{propagation front} through ``typically good'' regions of the process being analysed, 
   being it the front of the growth of an aggregate or the 
   front of the propagation of the infection, in the above cases. A type-1 site being converted to type~2 represents that enough time has passed so that that site is not 
   anymore contributing to the propagation front. Being it a random time, a site could have a very short conversion time, not allowing type~1 to spread from it to its neighbors. 
   This models situations where the propagation front may pass through \emph{atypically bad} regions 
   of space-time, which locally blocks the propagation of the front. The spread of type~2 then represents the spread of the influence that bad regions may have on neighboring areas. 
   If one can show that type~1 grows indefinitely despite the expansion of type 2, then such a reasoning would imply that the typical regions are dense enough to compensate the presence of bad regions in space-time, allowing
   the propagation front to survive indefinitely.

\subsection{Overview of paper}

	In Section~\ref{sec:prelim}, we provide a rigorous construction of the model on the $d$-ary tree that will facilitate later proofs and recall some notation from first passage percolation.
	In Section~\ref{sec:d_ary_tree}, we prove Theorem~\ref{thm:crit_lambda_tree} through a careful renormalisation scheme that controls how type 1 and type 2 spreads through the tree.
	In Section~\ref{sec:lattice}, we switch focus to the lattice and prove Theorem~\ref{thm:lattice} and Theorem~\ref{thm:rho}.
	
\section{Preliminaries}
\label{sec:prelim}

\subsection{Construction of the model on the $d$-ary tree}

	In this section we provide a particular construction of the model on the $d$-ary tree that will later be helpful in proofs.

	For $d\geq3$, let $\mathbb{T}_d$ be the $d$-ary tree with a distinguished site called the root, written as $\sigma$. 
	Fix constants $\rho>0$ and $\lambda>0$. 
	Let $V(\mathbb{T}_d)$ and $E(\mathbb{T}_d)$ be the vertex set and edge set of $\mathbb{T}_d$, respectively.
	Let $\{\mathcal{I}_x\}_{x\in V(\mathbb{T}_d)}$ be an i.i.d.\ collection of exponential random variables of rate $\rho$ that correspond to the conversion time of a site.
	That is, given a site $x\in V(\mathbb{T}_d)$, once $x$ is occupied by type 1, after a further time of $\mathcal{I}_x$ has expired, it converts to type 2.
	Let $\{t_{1,e}\}_{e\in E(\mathbb{T}_d)}$ be a collection of i.i.d.\ exponential random variables of rate 1 that correspond to the passage times for type 1. 
	Similarly, let $\{t_{u,e}\}_{e\in E(\mathbb{T}_d)}$ and $\{t_{d,e}\}_{e\in E(\mathbb{T}_d)}$ be two collections of i.i.d.\ exponential random variables of rate $\lambda$ that correspond to the passage times for type 2 upwards and downwards, respectively. 
	That is, if $e=xy$ is an edge with $d(\sigma,x)<d(\sigma,y)$, where $d(\cdot,\cdot)$ is the graph distance metric on $\mathbb{T}_d$, the passage time for type 2 spreading from $x$ to $y$ is given by $t_{d,e}$ and the passage time for type 2 spreading from $y$ to $x$ is given by $t_{u,e}$.
	In this case, we say $y$ is a \emph{descendent} of $x$.
	
	At time $t=0$, the root $\sigma$ is occupied by type 1 and all other sites are vacant. 
	If a site $x$ is occupied by type 1 and $y$ neighbours $x$, then $y$ is attempted to be occupied by type 1 after waiting $t_{1,xy}$ time.
	The occupation is successful if $y$ is vacant and suppressed otherwise. 
	Additionally, once a site is occupied by type 1, after waiting $\mathcal{I}_x$ time, it converts to type 2.
	If a site $x$ is occupied by type 2 and $y$ neighbours $x$, then $y$ is attempted to be occupied by type 2 after waiting $t_{u,xy}$ or $t_{d,xy}$ time, depending on whether $y$ is closer or further than $x$ to the root with respect to the graph-distance metric, respectively. 
	The occupation is successful if $y$ is unoccupied or is occupied by type 1.
	The requirement to distinguish between downward and upward passage times for type 2 is to decouple the different ways type 2 may spread and is an important ingredient in our proofs. 
	
	Given a site $x\in V(G)$, let $\tau_1(x)$ denote the first time that type 1 occupies $x$ and $\tau_2(x)$ denote the first time that type 2 occupies $x$, so that
$$
	\tau_i(x) = \inf\left\{t\geq0: \mbox{type }i \mbox{ occupies }x\right\} \quad \mbox{for }i=1,2.
$$
	If type 1 never occupies the site $x$, we write $\tau_1(x)=\infty$.	
	
\subsection{First passage percolation}	
	
	A \emph{path} is a sequence of distinct sites $\left(x_1,x_2,\ldots,x_n\right)$ such $x_ix_{i+1}\in E(\mathbb{T}_d)$ for each $i\in\left\{1,2,\ldots,n-1\right\}$. 
	Given a path $\gamma=\left(x_1,x_2,\ldots,x_n\right)$, we define the \emph{type \# passage time for }$\gamma$ as the random variable
\begin{equation}
\label{eq:FPP_sum}
	T_{\#}(\gamma)=\sum_{i=1}^{n-1}t_{\#,x_ix_{i+1}},
\end{equation}
where $\#\in\{1,u,d\}$.

	Given $x,y\in V(\mathbb{T}_d)$, define the \emph{type \# passage time from }$x$ \emph{to} $y$ as the random variable
$$
	T_{\#}(x\rightarrow y) = T_{\#}(\gamma_{x,y})
$$
where $\gamma_{x,y}$ is the unique shortest path from $x$ to $y$ and $\#\in\left\{1,u,d\right\}$.
	Note the direction of the arrow in the definition of the passage time will always agree with the descendent structure of the tree.
	That is, if $y$ is a descendent of $x$, the type 2 passage time from $x$ to $y$ is downward, while the type 2 passage time from $y$ to $x$ is upward.
	This is purely a notational subtlety and will not alter any of our arguments.

\section{Survival on the $d$-ary tree}
\label{sec:d_ary_tree}

	In this section we prove Theorem~\ref{thm:crit_lambda_tree}.
	We begin with a roadmap of the proof to help guide the reader in how we establish survival of type 1 with positive probability.
	
\subsection{Roadmap of proof of Theorem~\ref{thm:crit_lambda_tree}}

	Given $d\geq3$, recall $\mathbb{T}_d$ is the $d$-ary tree with $\sigma$ as a distinguished site called the root.
	For any $n\in\mathbb{Z}_{+}$, let $V_n$ denote the set of sites up to graph distance $n$ from the root and $\partial V_n$ denote the set of sites with graph distance exactly $n$ from the origin, so that
$$
	V_n = \left\{x\in V(\mathbb{T}_d):d(\sigma,x)\leq n\right\} \quad \mbox{and} \quad \partial V_n = \left\{x\in V(\mathbb{T}_d):d(\sigma,x)=n\right\},
$$
where $d(\cdot,\cdot)$ is the metric on $\mathbb{T}_d$ induced by shortest-paths between sites.
	We refer to the set $V_n$ as a \emph{box} of depth $n$ from $\sigma$, or just a box for brevity.

	The aim is to partition $\mathbb{T}_d$ into boxes that are labelled as \emph{good} or \emph{bad} according to events measurable with respect to the box, so that a good box implies that type 1 is able to propagate well through the box while type 2 is hindered.
	Let $k$ and $r$ be two large integers we set later. 
	Let $B_{1}=V_{kr}$ and $B_{2}=V_{kr+k^2}\backslash V_{kr}$ be the two segments of our box $B=B_1\cup B_2 = V_{kr+k^2}$.
	The reader should have in mind that we will eventually set $r$ much larger than $k$ so that $kr\gg k^2$.
	
	In $B_1$, the first section of the box, we want to prove that there are sufficiently many \emph{highways} down to $\partial V_{kr}$ with high probability.
	Roughly speaking, highways are paths where type 1 is typically faster than type 2, and we will prove that type 1 survives along these highways with positive probability.
	We aim to prove that many such highways exist in $B_1$ even if $\lambda$ is slightly larger than 1.
	This is made rigorous in Section~\ref{sec:perc}. 
	An illustration of highways in a good box can be seen in Figure~\ref{fig:goodbox}.
	
	Assuming we can show the existence of many highways through $B_1$, we wish to prove the following holds in $B_2$.
	From the set of sites in $\partial V_{kr}$ that are connected to $\sigma$ through highways, there are at least two such sites that can be extended down to depth $kr+k^2$ so that the type 1 passage time on the path is fast and the type 2 passage time on each edge with a site incident to the path is very large.
	The probability a given path satisfies these properties is extremely small, which is precisely why we require many highways existing through $B_1$ to ensure that two such paths exist with high probability.
	These paths that extend highways we refer to as \emph{spines}.
	The reason why we need such a strong property from the spines is that they are very close to the boundary of the box, and we need to ensure that conversions of type 2 from outside the a good box cannot propagate into the box and block type 1.
	These notions are made rigorous in Section~\ref{sec:excellent}.
	An illustration of spines can be seen in Figure~\ref{fig:goodbox}.
	
	We have not yet considered the fact that type 1 converts to type 2 at rate $\rho$. 
	The idea is that we set $\rho$ small enough so that no conversions can take place in a good box until type 1 can spread far down the tree.
	However, conversions could occur outside of a good box, allowing type 2 to spread upwards the tree into a highway or a spine.
	In Section~\ref{sec:backtracks}, we control how type 2 can spread in a good box through conversions occurring outside that good box, so that type 1 is not impeded on the previously constructed highways or spines.
	
	In Section~\ref{sec:good_box}, we put together the previous sections to define a good box and then use this construction of good boxes to prove Theorem~\ref{thm:crit_lambda_tree} via a branching argument in Section~\ref{sec:proof_thm_tree}.	
	The idea is that with positive probability, a branching structure of good boxes gives rise to at least one infinite path of sites such that all sites on this path are occupied by type 1 at some time. 
	
\begin{figure}
  \centering
    \includegraphics[width=0.65\textwidth]{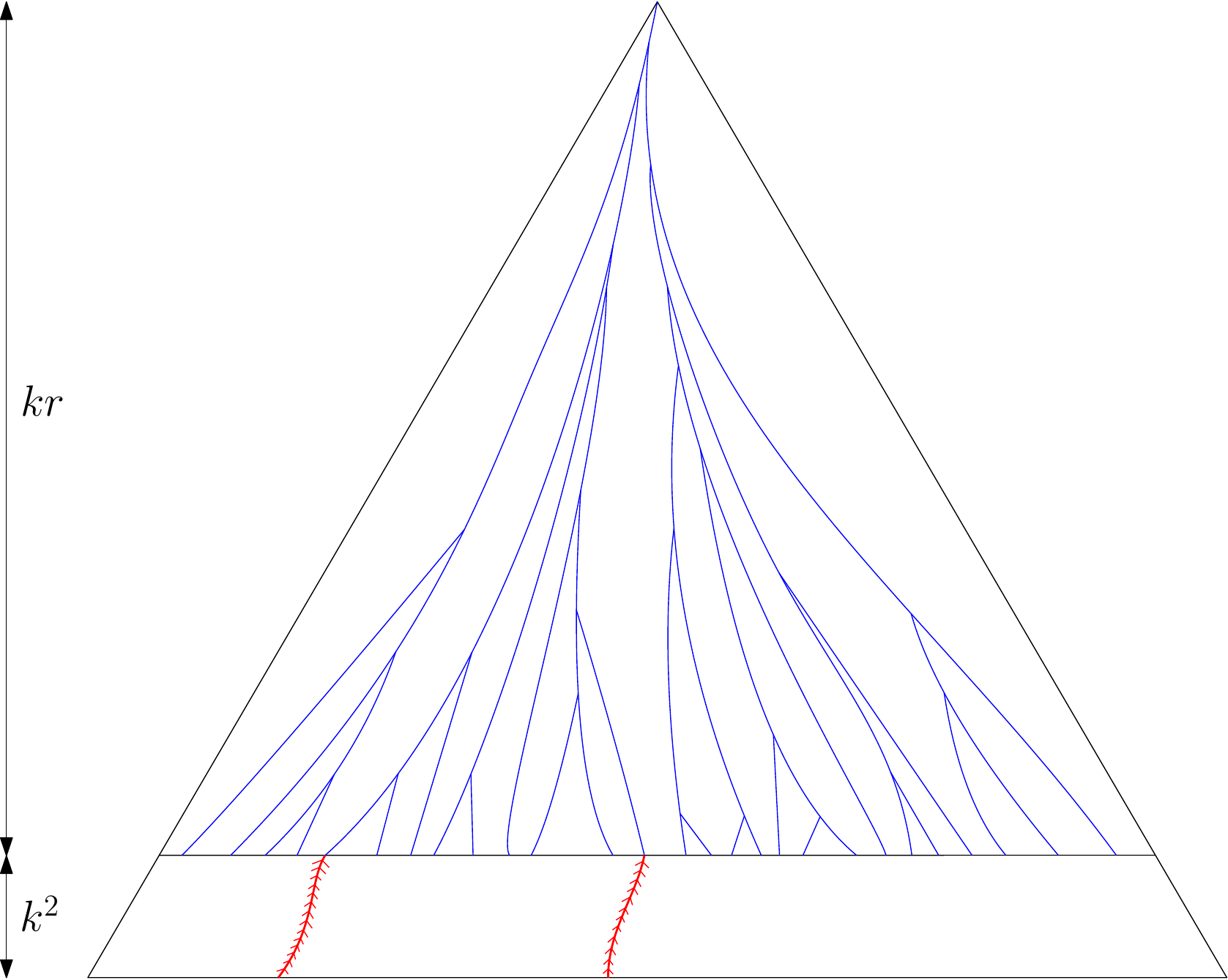}
    \caption{An illustration of a good box split into $B_1$ and $B_2$ with respective depths $kr$ and $k^2$.
    Blue paths represent the highways constructed in $B_1$ that connect the root of the box to depth $kr$. 
    Red paths represent the spines constructed in $B_2$ that extend certain highways to depth $kr+k^2$ in a box as well as any edges incident to them.}
    \label{fig:goodbox}
\end{figure}

\subsection{Percolating structure in $B_1$}
\label{sec:perc}
	
	In this section we prove the existence of a percolating structure that provides the highways needed for type 1 in $B_1$.
	This percolating structure arises from partitioning $B_1$ into \emph{sub-boxes} where we wish to prove there is the existence of paths for which there is strong control for the passage times of type 1 and type 2.
	
	We construct the sub-boxes as follows.
	Let $k$ be a large integer we set later.
	For $z\in V(\mathbb{T}_d)$, let $V^{\downarrow}_k(z)$ (resp.\  $\partial V^{\downarrow}_k(z)$) denote the set of sites of graph-distance less than or equal (resp.\ equal) to $k$ from $z$ such that the path to the root from the site must pass through $z$.

\begin{definition}[Good sub-box]
	Let $z\in V(\mathbb{T}_d)$ and consider the sub-box $V^{\downarrow}_k(z)$.
	For $\varepsilon>0$, let $\mathcal{H}_{\varepsilon}(z)=\mathcal{H}_{\varepsilon}^{(1)}(z)\cap \mathcal{H}_{\varepsilon}^{(\lambda)}(z)$, where
\begin{align}
\label{def:S_eps}
	& \mathcal{H}_{\varepsilon}^{(1)}(z)=\left\{y\in\partial V^{\downarrow}_k(z):T_1(z\rightarrow y)\leq (1-\varepsilon)k\right\},\\
& \mathcal{H}_{\varepsilon}^{(\lambda)}(z)=\left\{y\in\partial V^{\downarrow}_k(z):T_d(z\rightarrow y)\geq(1-\varepsilon^2)\tfrac{k}{\lambda}\right\}. \label{def:S_eps2}
\end{align}

	Define $V^{\downarrow}_k(z)$ to be $\varepsilon$-\emph{good} if $\mathcal{H}_{\varepsilon}(z)$ contains at least two distinct elements and $\varepsilon$-\emph{bad} otherwise.
	
	Given $y\in\mathcal{H}_{\varepsilon}(z)$, the path from $z$ to $y$ is called a \emph{highway}.
\end{definition}

\begin{remark}
	Note that in \eqref{def:S_eps2} in the definition of $ \mathcal{H}_{\varepsilon}^{(\lambda)}(z)$, we are using the \emph{downward} type 2 passage times.
\end{remark}
	
	With the notion of good sub-boxes to hand, we wish to prove that there is a percolating structure of good sub-boxes up to depth $kr$.	
	We first partition $B_1$ into sub-boxes of depth $k$, so that each sub-box is independently $\varepsilon$-good of every other sub-box.
	That is, the sub-boxes are of the form 
$$
	V_k^{\downarrow}(z) \mbox{ for }z\in\partial V_{ik} \mbox{ with } i\in\{0,1,2,....,r-1\}.
$$ 
	A sub-box being good means that there are at least two paths from its root to depth $k$ where there is a strong control on type 1 and type 2. 
	In particular, if $\lambda<1+\varepsilon$, then type 1 is faster along these paths since
$$
	1-\varepsilon = \frac{1-\varepsilon^2}{1+\varepsilon} < \frac{1-\varepsilon^2}{\lambda}.
$$ 
	In Figure~\ref{fig:subbox}, we see how highways in good sub-boxes can join, giving long highways that allow for good control on type 1 and type 2.

	To ensure that many such highways exist down to depth $kr$, we need to prove that sub-boxes are good with high probability. 
	This is the content of the following lemma.
	
\begin{figure}
  \centering
    \includegraphics[width=0.3\textwidth]{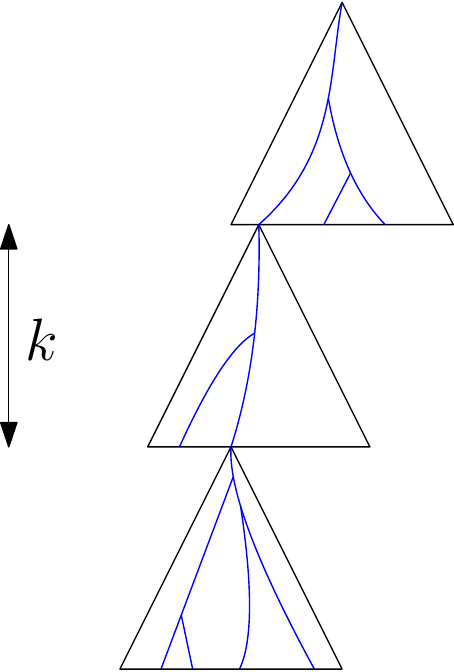}
    \caption{Triangles represent good sub-boxes and blue paths represent the highways constructed within them.}
    \label{fig:subbox}
\end{figure}
	
\begin{lemma}
\label{lem:prob_1box}
	There exists a constant $c>0$ such that the following holds.
	Fix $z\in V(\mathbb{T}_d)$.
	For all sufficiently small $\varepsilon>0$ and large enough $k$,
$$
	\mathbb{P}\left(V^{\downarrow}_k(z)\mbox{ is }\varepsilon\mbox{-good}\right)\geq 1-\exp\left(-c\varepsilon^4 k\right).
$$
\end{lemma}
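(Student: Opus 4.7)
The plan is to separate the two conditions defining $\mathcal{H}_\varepsilon(z)$, using that by construction the families $\{t_{1,e}\}$ and $\{t_{d,e}\}$ are independent, so $\mathcal{H}_\varepsilon^{(1)}(z)$ and the type-$2$ event are built from disjoint randomness. I expect the rate $\varepsilon^4$ in the lemma to come entirely from the type-$2$ Cramér tail, with the type-$1$ side contributing a strictly faster $\varepsilon^2$ rate that is absorbed since $\varepsilon^2 \geq \varepsilon^4$ for $\varepsilon \leq 1$.

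For the type-$1$ step, the family $\{T_1(z \to y) : y \in \partial V_k^\downarrow(z)\}$ is a branching random walk with $(d-1)$-ary branching (at least $2$ since $d \geq 3$) and i.i.d.\ Exp$(1)$ increments. Cramér gives $\mathbb{P}(T_1(z \to y) \leq (1-\varepsilon)k) \geq \mathrm{poly}(k)^{-1}\exp(-k I(1-\varepsilon))$ with $I(1-\varepsilon) = \varepsilon^2/2 + O(\varepsilon^3)$, so the first moment of $|\mathcal{H}_\varepsilon^{(1)}(z)|$ is exponential for small $\varepsilon$. Combined with a standard BRW concentration input (e.g.\ Theorem~\ref{thm:BRW}, the Addario-Berry--Reed minimum estimate, which together with the exponential growth of level sets near the minimum yields quantitative control on the number of fast paths), this would give $\mathbb{P}(|\mathcal{H}_\varepsilon^{(1)}(z)| \geq 4) \geq 1 - \exp(-c_1 \varepsilon^2 k)$ for some $c_1 > 0$.

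For the type-$2$ step, $T_d(\gamma)$ for a fixed length-$k$ path $\gamma$ is Gamma$(k,\lambda)$, and Chernoff gives $\mathbb{P}(T_d(\gamma) < (1-\varepsilon^2)k/\lambda) \leq \exp(-c_2 \varepsilon^4 k)$ (the $\varepsilon^4$ appearing because the Cramér rate is quadratic in the relative deviation $\varepsilon^2$). Conditioning on $\{t_{1,e}\}$, the set $\mathcal{H}_\varepsilon^{(1)}(z)$ is frozen, and independence of $\{t_{d,e}\}$ from $\{t_{1,e}\}$ gives
$$\mathbb{E}\bigl[|\mathcal{H}_\varepsilon^{(1)}(z) \setminus \mathcal{H}_\varepsilon(z)| \bigm| \{t_{1,e}\}\bigr] \leq |\mathcal{H}_\varepsilon^{(1)}(z)| \cdot \exp(-c_2 \varepsilon^4 k).$$
A conditional Markov inequality, applied on the event $|\mathcal{H}_\varepsilon^{(1)}(z)| \geq 4$, then forces $|\mathcal{H}_\varepsilon(z)| \geq 2$ with conditional probability at least $1 - 2\exp(-c_2 \varepsilon^4 k)$. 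Summing the two failure modes and using $\varepsilon^2 \geq \varepsilon^4$ yields the lemma's bound $1 - \exp(-c\varepsilon^4 k)$.

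The main obstacle one might fear --- correlations among the $T_d$ values of paths in $\mathcal{H}_\varepsilon^{(1)}(z)$ that share many edges near $z$ --- is entirely bypassed by the Markov step, since that inequality only uses marginal failure probabilities. The genuinely quantitative ingredient is therefore the type-$1$ BRW concentration in the second step: a naive Paley--Zygmund bound gives only a constant-order lower bound on $\mathbb{P}(|\mathcal{H}_\varepsilon^{(1)}(z)| \geq 4)$, so obtaining the exponential rate $e^{-c_1 \varepsilon^2 k}$ requires a genuine BRW input such as Theorem~\ref{thm:BRW}, possibly combined with a scale-decomposition of $V_k^\downarrow(z)$ into many independent sub-sub-trees at an intermediate depth to upgrade constant-order success probabilities into the required exponential concentration.
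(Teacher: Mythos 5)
Your proposal is correct and follows essentially the same route as the paper: split the event using the independence of $\{t_{1,e}\}$ and $\{t_{d,e}\}$, get exponentially good control on the number of type-1-fast paths from the Addario-Berry--Reed input via an intermediate-depth decomposition (this is exactly the paper's Lemma~\ref{lem:fast_type1}), and note that the $\varepsilon^4 k$ rate comes from the Gamma$(k,\lambda)$ lower-tail bound for the downward type-2 times along the selected paths. The only (cosmetic) difference is the finish: you ask for at least four fast sites and apply a conditional Markov inequality, whereas the paper fixes two fast sites on the event $|\mathcal{H}^{(1)}_{\varepsilon}(z)|\geq 2$ and takes a union bound over the two type-2 events, both of which avoid the shared-edge correlation issue.
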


	To prove Lemma~\ref{lem:prob_1box}, we recall some results from the theory of branching random walks, that can be viewed as an alternative representation of first passage percolation on trees. 
	The following result is due to Addario-Berry and Reed \cite[Theorem 3]{addario2009minima}, although we emphasise their result holds in a much greater generality and we state it only in what will be useful in our context.

\begin{theorem}
\label{thm:BRW}
	For $n\geq1$, let $M_n$ be the infimum over all passage time from $\sigma$ to $\partial V_n$, so that
$$
	M_n = \inf_{y\in \partial V_n}T_1(o\to y).
$$ 
	There exists a constant $\varepsilon_0\in(0,1)$ such that 
$$
	\mathbb{E}\left[ M_n \right] \leq (1-\varepsilon_0)n
$$
 for all $n$ large enough.
  Moreover, there exist constants $C,\delta>0$ such that for all $x\in\mathbb{R}$,
$$
	\mathbb{P}\left(\left|M_n-\mathbb{E}\left[M_n\right]\right|\geq x\right)\leq Ce^{-\delta x}.
$$
\end{theorem}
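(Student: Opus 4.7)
This is the specialisation of Addario-Berry and Reed's \cite{addario2009minima} branching random walk estimate to the $d$-ary tree with i.i.d.\ Exp$(1)$ edge weights. My plan follows their proof, splitting naturally into the linear mean bound and the $O(1)$ exponential concentration.

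For the expectation bound, I would run a first-moment / second-moment argument on paths. Let $N_n(\alpha)$ count the $d^n$ paths from $\sigma$ to $\partial V_n$ with type-$1$ passage time at most $\alpha n$. Each such passage time is Gamma$(n,1)$-distributed, so by Cram\'er's theorem
$$
\mathbb{E}[N_n(\alpha)] \,=\, d^n\,\mathbb{P}(\mathrm{Gamma}(n,1) \le \alpha n) \,=\, \exp\!\bigl(n(\log d - I(\alpha)) + O(\log n)\bigr),
$$
with $I(\alpha) = \alpha - 1 - \log\alpha$. Since $I(1) = 0 < \log d$, we may fix $\alpha_0 \in (0,1)$ with $\log d > I(\alpha_0)$, making $\mathbb{E}[N_n(\alpha_0)]$ grow exponentially. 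A standard second-moment computation---two distinct paths to depth $n$ share only a prefix, and their disjoint suffixes carry independent edge weights---yields $\mathbb{E}[N_n(\alpha_0)^2] \le C\,\mathbb{E}[N_n(\alpha_0)]^2$, and Paley--Zygmund then gives $\mathbb{P}(M_n \le \alpha_0 n) \ge c > 0$. A stochastic subadditivity argument ($M_{n+m}$ is dominated by $M_n$ plus an independent copy of $M_m$ issued from any depth-$n$ vertex realising $M_n$) then converts this lower-tail probability into the linear bound $\mathbb{E}[M_n] \le (1-\varepsilon_0)n$ for all $n$ large, with $\varepsilon_0 > 0$.

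For the concentration, the workhorse is the recursion
$$
M_n \,=\, \min_{1 \le i \le d}\bigl(X_i + M_{n-1}^{(i)}\bigr),
$$
where $X_1,\ldots,X_d \sim \mathrm{Exp}(1)$ are the root-edge weights and $M_{n-1}^{(1)},\ldots,M_{n-1}^{(d)}$ are i.i.d.\ copies of $M_{n-1}$, independent of the $X_i$. Writing $m_n = \mathbb{E}[M_n]$, I would prove by induction on $n$ that $\mathbb{E}[e^{\delta|M_n - m_n|}] \le K$ for some small $\delta > 0$ and universal $K$. For the upper tail, $\mathbb{P}(M_n \ge m_n + x)$ factorises over the $d$ independent subtrees, and the key gain is that $m_n < 1 + m_{n-1}$ by a positive amount (the $d$-fold minimum strictly reduces the mean), producing contraction when $d \ge 2$. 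For the lower tail, a union bound over the $d$ children reduces $\{M_n \le m_n - x\}$ to a single-subtree event, which is controlled by the exponential tail of $X_1$ together with the inductive hypothesis on $M_{n-1}$.

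The main obstacle is that naive Azuma or Efron--Stein estimates yield only $\sqrt{n}$-scale fluctuations, whereas the $O(1)$ scale in the statement demands a precise quantification of how much the $d$-fold minimum contracts distributions. Following Addario-Berry and Reed, I would first establish a uniform variance bound $\mathrm{Var}(M_n) = O(1)$, which feeds back to show that the drift $1 + m_{n-1} - m_n$ stays bounded below by a positive constant independent of $n$; only then can the moment generating function recursions above be closed. Arranging the upper and lower tail recursions to close simultaneously with matching constants is the delicate technical heart of the argument in \cite{addario2009minima}.
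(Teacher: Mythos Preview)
The paper does not prove this theorem: it is quoted as \cite[Theorem~3]{addario2009minima} and used entirely as a black box, so there is no in-paper argument to compare against. Your outline is a faithful high-level summary of the Addario--Berry--Reed strategy that the paper is citing.

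One small point if you intend to flesh this out: in the paper's convention the $d$-ary tree is the $d$-\emph{regular} tree (all vertices have degree $d$), so non-root vertices have $d-1$ children, not $d$. Thus $|\partial V_n| = d(d-1)^{n-1}$ rather than $d^n$, and the recursion for $M_n$ is not literally homogeneous in $n$ (the root step differs from later steps). This is cosmetic for the mean bound---you still have exponential branching with rate $\log(d-1) > 0$ for $d \ge 3$---and for the concentration one simply runs the recursion on the $(d-1)$-branching subtree below any child of the root; but the statement as written in your proposal (``the $d^n$ paths'', ``$\min_{1 \le i \le d}$'') needs this adjustment.
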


	The result in Theorem~\ref{thm:BRW} concerns only the fastest path, but we will require that many paths can satisfy its conditions.
	This is the content of the following lemma. 
	Recall the definition of $\mathcal{H}_{\varepsilon}^{(1)}(z)$ in \eqref{def:S_eps}.

\begin{lemma}
\label{lem:fast_type1}
	There exists a constant $c>0$ such that the following holds.
	Fix $z\in V(\mathbb{T}_d)$.
	For all $\varepsilon>0$ small enough, there exists $k_1=k_1(\varepsilon)$ such that if $k>k_1$, then
$$
	\mathbb{P}\left(\left|\mathcal{H}_{\varepsilon}^{(1)}(z)\right|\geq d^{\varepsilon k}\right)\geq 1 - e^{-c\varepsilon k}.
$$
\end{lemma}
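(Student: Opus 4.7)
The plan is a two-scale argument that splits the depth-$k$ subtree rooted at $z$ at intermediate depth $s = \lceil \alpha k\rceil$, with $\alpha = 2\varepsilon \log d / \log(d-1)$. The reason for this choice is twofold: first, $(d-1)^s \geq d^{2\varepsilon k}$, so if I can locate just one fast path through each of the disjoint subtrees hanging off the vertices $w\in \partial V_s^\downarrow(z)$, I harvest at least $(d-1)^s \geq d^{\varepsilon k}$ distinct fast paths from $z$ to depth $k$; second, $\alpha \to 0$ as $\varepsilon \to 0$, so the top portion of the tree is negligible compared to the bottom portion of depth $k-s$.

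For the top portion, I would use a one-sided Chernoff bound. The passage time $T_1(z\to w)$ for any fixed $w\in\partial V_s^\downarrow(z)$ is a sum of $s$ i.i.d.\ $\mathrm{Exp}(1)$ variables, so $\mathbb{P}(T_1(z\to w) > (1+\eta)s)\leq e^{-sI(1+\eta)}$ for the Cram\'er rate function $I(x) = x - 1 - \log x$. Picking $\eta = \eta(d)$ large enough that $I(1+\eta)\geq 2\log(d-1)$, a union bound over the $\leq (d-1)^s$ vertices of $\partial V_s^\downarrow(z)$ yields
\begin{equation*}
\mathbb{P}\bigl(\exists\, w \in \partial V_s^\downarrow(z):\, T_1(z\to w) > (1+\eta)s\bigr) \leq (d-1)^s e^{-sI(1+\eta)} \leq e^{-s\log(d-1)} \leq e^{-c_1 \varepsilon k}.
\end{equation*}
For the bottom portion, I would apply Theorem~\ref{thm:BRW} in each of the $(d-1)^s$ disjoint depth-$(k-s)$ subtrees rooted at the vertices of $\partial V_s^\downarrow(z)$. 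Writing $M^{(w)}_{k-s} := \inf_{y\in \partial V_{k-s}^\downarrow(w)}T_1(w\to y)$, Theorem~\ref{thm:BRW} gives $\mathbb{E}[M^{(w)}_{k-s}]\leq (1-\varepsilon_0)(k-s)$ together with a sub-exponential tail bound, so $\mathbb{P}(M^{(w)}_{k-s} > (1-\varepsilon_0/2)(k-s)) \leq C e^{-\delta\varepsilon_0 (k-s)/2}$. The events across different $w$ depend on disjoint sets of edges, so a union bound leaves error of order $(d-1)^s e^{-\delta\varepsilon_0(k-s)/2}$, which for $\alpha$ small enough (ensured by taking $\varepsilon$ small) is bounded by $e^{-c_2 k}$.

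On the intersection of the two good events, each $w\in \partial V_s^\downarrow(z)$ yields a descendant $y_w\in \partial V_k^\downarrow(z)$ with $T_1(z\to y_w) \leq (1+\eta)s + (1-\varepsilon_0/2)(k-s) = (1-\varepsilon_0/2)k + (\eta+\varepsilon_0/2)s$. Since $s \leq 2\alpha k$ with $\alpha\propto\varepsilon$ and $\eta,\varepsilon_0$ are constants depending only on $d$, the right-hand side is bounded by $(1-\varepsilon)k$ for all sufficiently small $\varepsilon$. The $y_w$ lie in disjoint subtrees and are therefore pairwise distinct, so $|\mathcal{H}_\varepsilon^{(1)}(z)| \geq (d-1)^s \geq d^{\varepsilon k}$ on this event, with total error at most $e^{-c\varepsilon k}$. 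I expect the main obstacle to be the balancing of the intermediate depth $s$: the counting requirement forces $s \gtrsim \varepsilon k$, while the time budget forces $s$ small enough that the upper-bound $(1+\eta)s$ on the top cost does not consume the savings $(\varepsilon_0/2)(k-s)$ produced on the bottom part. Both constraints can only be met simultaneously because we are free to take $\varepsilon$ below a threshold depending on $d$ (via $\eta$ and $\varepsilon_0$), which separates the two scales.
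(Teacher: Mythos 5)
Your proposal is correct and follows essentially the same two-scale argument as the paper's proof: split the depth-$k$ subtree at an intermediate depth of order $\varepsilon k$, control all top passage times by a Chernoff bound plus union bound, apply Theorem~\ref{thm:BRW} with its exponential concentration in each subtree below the split and union bound, then combine the two budgets to stay below $(1-\varepsilon)k$ for $\varepsilon$ small. The only differences are cosmetic: you split at $\alpha k$ with $\alpha=2\varepsilon\log d/\log(d-1)$ (which in fact handles the $(d-1)^{\varepsilon k}$ versus $d^{\varepsilon k}$ counting slightly more carefully than the paper's split at exactly $\varepsilon k$) and you take the BRW deviation proportional to $k-s$ rather than to $\varepsilon k$.
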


\begin{proof}
	Firstly, we want to prove that the passage time over all paths from $z$ to $\partial V_{\varepsilon k}^{\downarrow}(z)$ is bounded above by $C\varepsilon k$ with sufficiently high probability, where $C$ is a large constant and $\varepsilon$ is a small constant we set later.
	For a given site $y\in\partial V_{\varepsilon k}^{\downarrow}(z)$ and a large enough constant $C$,  we see through a Chernoff bound argument (see Lemma~\ref{lem:chernoff}) that
$$
	\mathbb{P}\left(T_1(z\to y)\geq C\varepsilon k\right)\leq e^{-\tfrac{C\varepsilon k}{2}}.
$$
By taking the union bound over all $y\in\partial V_{\varepsilon k}^{\downarrow}(z)$,
$$
	\mathbb{P}\left(\sup_{y\in\partial V_{\varepsilon k}^{\downarrow}(z)}T_1(\sigma\to y)\geq C\varepsilon k\right)\leq d^{\varepsilon k}e^{-\tfrac{C\varepsilon k}{2}}.
$$
	
	 By Theorem~\ref{thm:BRW}, there exists a constant $\delta>0$, that does not depend on $k$, such that for each $y\in\partial V_{\varepsilon k}^{\downarrow}(z)$,  the following holds. 
	For all $x>0$
$$
	\mathbb{P}\left(T_1(y\to\partial V_k^{\downarrow}(z))\geq (1-\varepsilon_0)(1-\varepsilon)k+x\right)\leq e^{-\delta x},
$$
where $\varepsilon_0$ is as given in Theorem~\ref{thm:BRW}.
	By the union bound, for all $x>0$
$$
	\mathbb{P}\left(\bigcup_{y\in\partial V_{\varepsilon k}^{\downarrow}(z)}\left\{T_1(y\to\partial V^{\downarrow}_k(z))\geq (1-\varepsilon_0)(1-\varepsilon)k+x\right\}\right) \leq d^{\varepsilon k}e^{-\delta x}.
$$
If $x = 2\varepsilon k\left(\log d\right) /\delta$, then the right-hand term above can be bounded above by $e^{-\delta x/2}$.
	Hence
\begin{align*}
	\mathbb{P}\Bigg(&\bigcap_{y\in\partial V_{\varepsilon k}^{\downarrow}(z)} \left\{T_1(y\to\partial V_k^{\downarrow}(z))< (1-\varepsilon_0)(1-\varepsilon)k+\tfrac{2\varepsilon k\log d}{\delta}\right\}\cap \left\{\sup_{y\in\partial V_{\varepsilon k}^{\downarrow}(z)}T_1(z\to y)< C\varepsilon k\right\}\Bigg)\\
	 & \geq 1-e^{-\tfrac{\delta\varepsilon k}{4}}-e^{-\tfrac{C \varepsilon k}{4}},\\
& \geq 1-e^{-c\varepsilon k},
\end{align*}
for some constant $c>0$ and all sufficiently large $k$.
	Moreover, the event directly above implies the existence of at least $d^{\varepsilon k}$ sites in $\partial V_{k}^{\downarrow}(z)$, so that if $y$ is one such site,
\begin{align*}
T_1(z\to y)\leq (1-\varepsilon_0)(1-\varepsilon)k+\tfrac{2\varepsilon k\log d}{\delta} +C\varepsilon k\leq (1-\varepsilon)k,
\end{align*}
where the final inequality holds so long as $\varepsilon$ satisfies
$$
	\varepsilon \leq \frac{\varepsilon_0}{\varepsilon_0+C+\tfrac{2\log d}{\delta}}.
$$
\end{proof}

	With Lemma~\ref{lem:fast_type1} established, we are now in a position to prove Lemma~\ref{lem:prob_1box}.

\begin{proof}[Proof of Lemma~\ref{lem:prob_1box}]
	By Lemma~\ref{lem:fast_type1}, there exists a constant $c>0$ such that for a small enough choice of $\varepsilon>0$ and then large enough choice of $k$,  
\begin{equation}
\label{pf:lm32_1}
	\mathbb{P}\left(\left|\mathcal{H}_{\varepsilon}^{(1)}(z)\right|\geq2\right)\geq 1-e^{-c\varepsilon k}.
\end{equation}

	Conditional on the event $\{|\mathcal{H}_{\varepsilon}^{(1)}(z)|\geq2\}$, let $y_1,y_2\in \mathcal{H}_{\varepsilon}^{(1)}(z)$ be distinct sites.
	For $i\in\{1,2\}$, let $E_i$ be the event
$$
	E_i=\left\{T_d(z\rightarrow y_i) \geq \left(1-\varepsilon^2\right)\tfrac{k}{\lambda}\right\}.
$$
	Since $T_d(z \to y_i)$ is a sum of $k$ i.i.d.\ Exponential($\lambda$) random variables, by a Chernoff bound argument (see Lemma~\ref{lem:chernoff}), we deduce there exists a constant $c'>0$ such that for all sufficiently large $k$,
$$
	\mathbb{P}\left(E_i\right)\geq 1-\exp\left(-c'\varepsilon^4 k\right) \quad \mbox{for }i=1,2.
$$
	By the union bound, 
\begin{equation}
\label{pf:lm32_2}
	\mathbb{P}\left(E_1\cap E_2\right)\geq 1 - \mathbb{P}(E_1^c) - \mathbb{P}(E_2^c) \geq 1 - 2\exp\left(-c'\varepsilon^4 k\right).
\end{equation}
	The result follows by \eqref{pf:lm32_1} and \eqref{pf:lm32_2} through the independence of the passage times for type 1 and type 2.
\end{proof}

	To ease the statements of results, henceforth we assume that $\varepsilon$ is small enough and $k$ is large enough so Lemma~\ref{lem:prob_1box} is satisfied.
	
	Consider the box $B=V_{kr+k^2}$.
	Let $\mathcal{P}_k\subset\partial V_{kr}$ be the set of sites whose geodesic to $\sigma$ only passes through highways of good sub-boxes. 
	Our proof of Theorem~\ref{thm:crit_lambda_tree} will rely on $\mathcal{P}_k$ containing sufficiently many elements, which is established in the following lemma.
	
\begin{lemma}
\label{lem:depth_kr}
	Fix $\alpha\in(1,2)$.
	If there exists a constant $c$ such that $r\leq k^c$, then
$$
	\lim_{k\to\infty}\mathbb{P}\left(\left|\mathcal{P}_k\right|>\alpha^r\right)= 1.
$$
\end{lemma}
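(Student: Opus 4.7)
The plan is to dominate $|\mathcal{P}_k|$ from below by a Galton--Watson branching process and control its fluctuations via a two-phase argument. Write $q := \exp(-c_1 \varepsilon^4 k)$ where $c_1$ is the constant from Lemma~\ref{lem:prob_1box}, so that every sub-box is good with probability at least $1-q$. Since distinct sub-boxes $V^{\downarrow}_k(z)$ are edge-disjoint, their good/bad statuses are independent. Define $(Y_i)_{i=0}^{r}$ by $Y_0 = 1$ and, for $i \geq 0$, taking $Y_{i+1} := 2\cdot(\text{number of good sub-boxes among the $Y_i$ sub-boxes rooted at the alive sites of generation }i)$; in other words, at each good sub-box we follow exactly two highways down to the next level. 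This is a Galton--Watson process with offspring law stochastically dominating $2\cdot\mathrm{Bern}(1-q)$, and by construction $|\mathcal{P}_k| \geq Y_r$, so it suffices to show $\mathbb{P}(Y_r > \alpha^r) \to 1$.

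Set $r_0 := \lfloor c_1 \varepsilon^4 k / (2\log 2)\rfloor$ and split into two phases. In the \emph{nucleation phase} $i < \min(r,r_0)$, I argue that with probability $1 - o_k(1)$ every sub-box encountered is good: the total number explored is at most $2^{r_0}$, so a union bound yields failure probability at most $2^{r_0} q \leq \exp(-c_1\varepsilon^4 k/2) \to 0$. On this event $Y_{\min(r,r_0)} = 2^{\min(r,r_0)}$, and if $r \leq r_0$ we are already done since $Y_r = 2^r > \alpha^r$ because $\alpha < 2$.

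Assume now $r > r_0$. In the \emph{concentration phase} $r_0 \leq i < r$, I show inductively that $Y_{i+1} \geq \alpha Y_i$ at each step; this suffices since it yields $Y_r \geq 2^{r_0}\alpha^{r-r_0} \geq \alpha^r$, again using $\alpha < 2$. The inductive step requires at most $(1-\alpha/2)Y_i$ of the $Y_i$ sub-boxes to be bad. The bad count is stochastically dominated by $\mathrm{Bin}(Y_i, q)$, and a standard Chernoff bound gives
\[
    \mathbb{P}\bigl(\mathrm{Bin}(Y_i, q) \geq (1-\alpha/2) Y_i \bigm| Y_i\bigr) \leq \exp\bigl(-Y_i \cdot D(1-\alpha/2 \,\|\, q)\bigr),
\]
where $D(\cdot\|\cdot)$ is the binary relative entropy. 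Since $1-\alpha/2 \in (0,1)$ is a fixed constant and $q$ decays exponentially in $k$, for large $k$ one has $D(1-\alpha/2 \,\|\, q) \geq c_2 k$ for some $c_2 > 0$. Inductively $Y_i \geq 2^{r_0} \geq \exp(c_1\varepsilon^4 k / 4)$, so each per-step failure probability is at most $\exp(-\exp(c_1\varepsilon^4 k / 4)\cdot c_2 k)$, doubly exponential in $k$. A union bound over the at most $r \leq k^c$ steps gives overall failure probability $o_k(1)$, completing the argument.

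The main obstacle is the calibration of the threshold $r_0$: it must be small enough that the crude union bound over $2^{r_0}$ sub-boxes in the nucleation phase still beats the good-sub-box probability $1 - q$, yet large enough that the resulting seed population $Y_{r_0}$ is exponentially large in $k$, so that the Chernoff concentration in the subsequent phase is strong enough to overwhelm the polynomial number $k^c$ of remaining generations in the final union bound. The choice $r_0 = \Theta(\varepsilon^4 k)$ threads both needles at once, which is why the polynomial growth rate $r \leq k^c$ hypothesized in the lemma is enough.
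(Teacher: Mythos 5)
Your argument is correct, but it takes a genuinely different route from the paper. Both proofs start from the same coupling: each good sub-box (good with probability at least $1-e^{-c\varepsilon^4 k}$ by Lemma~\ref{lem:prob_1box}, independently across sub-boxes since they are edge-disjoint) contributes at least two highways, so $|\mathcal{P}_k|$ dominates the leaf count of a binary-tree percolation-type structure of depth $r$. The paper then finishes in two lines: it compares with site percolation of parameter $p$ on the binary tree, notes that $r\leq k^c$ forces $p^r\to 1$, and applies Markov's inequality to the number of depth-$r$ leaves \emph{not} connected to the root, giving $\mathbb{P}(N_r\leq\alpha^r)\leq(1-p^r)/(1-(\alpha/2)^r)$. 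You instead build a Galton--Watson lower bound $Y_r\leq|\mathcal{P}_k|$ and run a two-phase nucleation/concentration argument: no bad sub-box among the first $2^{r_0}$ explored (with $r_0=\Theta(\varepsilon^4k)$), then $Y_{i+1}\geq\alpha Y_i$ at every later generation via a binomial Chernoff bound and a union bound over the $r\leq k^c$ generations. Your version is longer, and the nucleation-phase union bound over ``at most $2^{r_0}$ sub-boxes'' should strictly be phrased as a sequential revelation (the explored sub-boxes form a random set, but conditionally on the exploration history each newly revealed sub-box is bad with probability at most $q$, so summing over exploration steps gives $2^{r_0}q$); this is standard and not a gap. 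What your approach buys is robustness: it does not need $rq\to0$, only that the doubly exponential per-step bound beats $r$, so it would tolerate $r$ growing far faster than polynomially in $k$, whereas the paper's Markov bound is tied to $r\leq k^c$ (through $p^r\to1$) but is considerably shorter. For the lemma as stated, both arguments suffice.
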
	
	
	The proof of Lemma~\ref{lem:depth_kr} is a consequence of Lemma~\ref{lem:prob_1box} and the following elementary result, as the sub-boxes in $B_1$ are independently good or bad.

\begin{lemma}
	Fix $\alpha\in(1,2)$ and $r\in\mathbb{N}$.
	Consider independent site percolation of parameter $p=p(r)$ on the binary tree so that
$$
	\lim_{r\to\infty}p^r = 1.
$$
	Let $N_r$ be the number of sites at depth $r$ connected to the origin by an open path.
	Then
$$
	\lim_{r\to\infty}\mathbb{P}\left(N_r > \alpha^r\right)= 1.
$$
\end{lemma}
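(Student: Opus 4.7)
The plan is to apply Markov's inequality to the number of depth-$r$ vertices that are \emph{not} connected to the origin. Writing $N_r^c := 2^r - N_r$, the hypothesis $p^r \to 1$ will make $\mathbb{E}[N_r^c]$ negligible compared with $2^r - \alpha^r$, and the conclusion drops out immediately.

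First I would observe that the binary tree has exactly $2^r$ vertices at depth $r$, and each is joined to the origin by a unique path. A fixed depth-$r$ vertex fails to be connected precisely when at least one of the sites on its path is closed, which occurs with probability $1 - p^{r+1}$ if the origin is part of the percolation (or $1 - p^r$ if the origin is declared always open; either convention works identically in the sequel, since $p^{r+1}=p\cdot p^r\to 1$ as well). By linearity of expectation, $\mathbb{E}[N_r^c] \leq 2^r(1 - p^r)$.

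Next I would rewrite the target event as a tail event for $N_r^c$: $\{N_r \leq \alpha^r\}$ is the same as $\{N_r^c \geq 2^r - \alpha^r\}$. Markov's inequality then gives
$$
\mathbb{P}(N_r \leq \alpha^r) \;\leq\; \frac{2^r(1-p^r)}{2^r - \alpha^r} \;=\; \frac{1-p^r}{1-(\alpha/2)^r}.
$$
Since $\alpha \in (1,2)$, the denominator tends to $1$ as $r \to \infty$; since $p^r \to 1$ by hypothesis, the numerator tends to $0$. Hence $\mathbb{P}(N_r > \alpha^r) \to 1$, which is the desired conclusion.

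There is no substantial obstacle in this argument: the hypothesis $p^r \to 1$ is strong enough (it forces $1-p = o(1/r)$) that a first-moment bound on the complement $N_r^c$ closes the argument in one line. The only conceptual step is to recognise that one should count disconnected vertices rather than attempt to control $N_r$ directly --- a concentration bound on $N_r$ itself (for instance via Paley--Zygmund) is available but requires estimating $\mathbb{E}[N_r^2]$ through a sum over common ancestors, which is comparatively tedious and yields essentially the same conclusion.
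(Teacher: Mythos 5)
Your proof is correct and follows essentially the same route as the paper: Markov's inequality applied to the number of depth-$r$ sites not connected to the root, yielding the identical bound $\mathbb{P}(N_r \leq \alpha^r) \leq \frac{1-p^r}{1-(\alpha/2)^r} \to 0$.
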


\begin{proof}
By applying Markov's inequality to the number of sites not connected to the root by open sites, we deduce
$$
	\mathbb{P}\left(2^r-N_r\geq 2^r-\alpha^r\right) \leq \frac{2^r-\mathbb{E}[N_r]}{2^r-\alpha^r} = \frac{1-p^r}{1-(\alpha/2)^r},
$$
and the result follows.
\end{proof}

\subsection{Construction of spines}
\label{sec:excellent}
	
	In Lemma~\ref{lem:depth_kr} we proved in the box $B=V_{kr+k^2}$, there exists many sites at depth $kr$ that are connected to the root of $B$ through highways with high probability.
	The aim of this section is to extend some of these highways down to depth $kr+k^2$ through what we will refer to as \emph{spines}, so that spines provide a strong control on the type 1 and type 2 passage times.
	
	Recall $\mathcal{P}_k$ is the set of sites $y$ in $\partial V_{kr}$ such that the path from $\sigma$ to $y$ only passes through the highways in good sub-boxes in $B_1$.

\begin{definition}[Spine]
	For each $z\in \mathcal{P}_k$, let $s(z)\in\partial V^{\downarrow}_{k^2}(z)$ satisfy
$$
	T_1(z\to s(z)) = \inf_{y\in\partial V^{\downarrow}_{k^2}(z)}T_1(z\rightarrow y).
$$
The path $s_{z}$ from $z$ to $s(z)$ is defined to be the \emph{spine from }$z$.
\end{definition}

	Given $z\in\mathcal{P}_k$, we want $s_z$ to satisfy the following properties. 
	Firstly, we want the type 1 passage time on $s_z$ to be bounded above by $(1-\varepsilon)k^2$, so type 1 can readily spread to depth $kr+k^2$.
	Secondly, we want every edge incident to a site in the spine $s_z$ to have a type 2 passage time of at least $k^3$ with the exception of edges only incident to $s(z)$. 
	The edges with both endpoints contained on $s_z$ will be measured with the downward passage times and edges with only one endpoint on $s_z$ will be measured with the upward passage time.
	This will give the required impediment to stop type 2 from blocking type 1 on highways.
	Let $\mathcal{S}_k\subset\mathcal{P}_k$ be the set of sites that satisfy these properties, so that $\mathcal{S}_k=\mathcal{S}_k^{(1)}\cap\mathcal{S}_k^{(2)}\cap\mathcal{S}_k^{(3)}$, where
\begin{align*}
\mathcal{S}_k^{(1)} &= \left\{z\in\mathcal{P}_k:T_1(s_z)\leq (1-\varepsilon)k^2\right\},\\
\mathcal{S}_k^{(2)} &= \left\{z\in\mathcal{P}_k: \bigcap_{y\in s_z\backslash s(z)}\bigcap\limits_{\substack{y'\sim y\\ y'\notin s_z}}\left\{T_u(yy')\geq k^3\right\}\right\},\\
\mathcal{S}_k^{(3)} &= \left\{z\in\mathcal{P}_k: \bigcap\limits_{\substack{y,y'\in s_z \\ y\sim y'}}\left\{T_d(yy')\geq k^3\right\}\right\}.
\end{align*}
	It is clear the probability that a given site $z\in\mathcal{P}_k$ satisfies the properties above is extremely small.
	We will recover that $|\mathcal{S}_k|\geq2$ with high probability so long as $\mathcal{P}_k$ contains sufficiently many elements.
	This can be achieved by setting $r=k^6$, and henceforth, $r$ shall take this value.
	This is established formally in the following lemma.

\begin{lemma}
\label{lem:good_spine}
	Let $r=k^6$ and assume $|\mathcal{P}_{k}|> \alpha^r$ for some $\alpha\in(1,2)$. 
	Then,
$$
	\lim_{k\to\infty}\mathbb{P}\left(|\mathcal{S}_k|\geq2\right)=1.
$$
\end{lemma}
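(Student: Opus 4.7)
The strategy is to treat each $z\in\partial V_{kr}$ as providing an independent Bernoulli trial for membership in $\mathcal{S}_k$, with a very small but not too small success probability, and then to leverage the hypothesis $|\mathcal{P}_k|>\alpha^{r}=\alpha^{k^{6}}$ by a Chernoff argument. The key preliminary observation is that for any $z\in\partial V_{kr}$, the event $\{z\in\mathcal{S}_k\}$ is measurable with respect to: (i) the type 1 passage times of edges in the subtree $V^{\downarrow}_{k^2}(z)$, which determine $s_z$ and govern $\mathcal{S}_k^{(1)}$; (ii) the downward type 2 passage times of edges of $s_z$, governing $\mathcal{S}_k^{(3)}$; and (iii) the upward type 2 passage times of non-spine edges adjacent to $s_z\setminus\{s(z)\}$, together with the edge from $z$ to its parent, governing $\mathcal{S}_k^{(2)}$. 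Because distinct elements of $\partial V_{kr}$ have disjoint subtrees and distinct parent edges, the events $\{z\in\mathcal{S}_k\}_{z\in\partial V_{kr}}$ are mutually independent. Moreover, they are independent of $\mathcal{P}_k$: by construction $\mathcal{P}_k$ is measurable only with respect to type 1 and \emph{downward} type 2 passage times on edges of $V_{kr}$, and the explicit decoupling of upward and downward type 2 times in Section~\ref{sec:prelim} ensures that the lone parent-edge variable in (iii), which is the only element of (i)--(iii) lying in $V_{kr}$, is independent of everything entering $\mathcal{P}_k$.

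The next task is to lower bound $p_k:=\mathbb{P}(z\in\mathcal{S}_k)$, which by symmetry does not depend on $z$. Applying Theorem~\ref{thm:BRW} to the depth-$k^{2}$ subtree $V^{\downarrow}_{k^2}(z)$, for any fixed $\varepsilon<\varepsilon_0$ one obtains
\begin{equation*}
\mathbb{P}(\mathcal{S}_k^{(1)})=\mathbb{P}\bigl(M_{k^2}\le(1-\varepsilon)k^{2}\bigr)\ge 1 - C e^{-\delta(\varepsilon_0-\varepsilon)k^{2}}.
\end{equation*}
Conditional on $s_z$, which depends only on type 1 data and so is independent of all type 2 variables, the event $\mathcal{S}_k^{(2)}\cap\mathcal{S}_k^{(3)}$ asks that $k^{2}$ downward and at most $(d-2)k^{2}+(d-1)$ upward independent $\mathrm{Exp}(\lambda)$ variables all exceed $k^{3}$; since $\mathbb{P}(\mathrm{Exp}(\lambda)\ge k^{3})=e^{-\lambda k^{3}}$, the conditional probability is at least $\exp(-d\lambda k^{5})$ for $k$ large. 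Combining, $p_k\ge \tfrac12 e^{-d\lambda k^{5}}$ for all sufficiently large $k$.

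Finally, with $r=k^{6}$ and on the event $\{|\mathcal{P}_k|>\alpha^{r}\}$, the conditional distribution of $|\mathcal{S}_k|$ given $\mathcal{P}_k$ stochastically dominates $\mathrm{Binomial}(\alpha^{k^{6}},p_k)$, whose mean is at least $\tfrac12\exp\bigl(k^{6}\log\alpha-d\lambda k^{5}\bigr)$ and hence tends to infinity as $k\to\infty$. A standard multiplicative Chernoff bound then yields
\begin{equation*}
\mathbb{P}\bigl(|\mathcal{S}_k|\le 1 \,\big|\, \mathcal{P}_k\bigr)\le \exp\bigl(-\tfrac18 \alpha^{k^{6}} p_k\bigr)\longrightarrow 0
\end{equation*}
on $\{|\mathcal{P}_k|>\alpha^{r}\}$, which is exactly the claim. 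The only real obstacle in this argument is the independence bookkeeping of the first paragraph: without the explicit separation of upward and downward type 2 passage times, the upward variable on the parent edge of $z$, which sits inside $V_{kr}$, would be entangled with $\mathcal{P}_k$ and the trials would not factor cleanly. The rest is a routine large-deviations calculation whose key numerical input is the polynomial gap $r=k^{6}\gg k^{5}$ between the exponent of the number of candidates and the exponent of the per-candidate success probability.
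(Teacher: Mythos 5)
Your proposal is correct and follows essentially the same route as the paper: independent candidate trials indexed by $z\in\mathcal{P}_k$, a per-candidate success probability of order $e^{-ck^5}$ obtained from Theorem~\ref{thm:BRW} together with the $e^{-\lambda k^3}$ tail for the at most order $dk^2$ incident edges, overwhelmed by the $\alpha^{k^6}$ candidates. Your explicit independence bookkeeping (disjoint subtrees, and the up/down decoupling for the parent edge) and the binomial/Chernoff conclusion just spell out what the paper leaves implicit in its final sentence.
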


\begin{proof}
	The collection of random variables $\left\{T_1(s_z)\right\}_{z\in \mathcal{P}_k}$ are independent, and by Theorem~\ref{thm:BRW}, there exists a constant $\delta>0$ that does not depend on $k$, such that for each $z\in\mathcal{P}_k$,
$$
	\mathbb{P}\left(T_1(s_z)\geq (1-\varepsilon)k^2\right)<e^{-\delta k^2}.
$$
	Moreover, since the probability that the type 2 passage time of an edge is at least $k^3$ is $e^{-\lambda k^3}$, by independence of type 1 and type 2 passage times, for each $z\in\mathcal{P}_k$,
\begin{align}
	\mathbb{P}&\left(\left\{T_1(s_z)< (1-\varepsilon)k^2\right\}\cap\bigcap_{y\in s_z\backslash s(z)}\bigcap\limits_{\substack{y'\sim y\\ y'\notin s_z}}\left\{T_u(yy')\geq k^3\right\}\cap\bigcap\limits_{\substack{y,y'\in s_z \\ y\sim y'}}\left\{T_d(yy')\geq k^3\right\}\right) \nonumber\\
	&> \left(1 - e^{-\delta k^2}\right)e^{-\lambda k^3\cdot (d-1) k^2} \nonumber\\
	 &\geq e^{-c k^5},  \label{proof:cand_paths}
\end{align}
for some constant $c=c(\lambda)>0$.
	From \eqref{proof:cand_paths} we deduce the probability there exist at least two such paths tends to 1 as $k$ goes to infinity as there are more than $\alpha^{k^6}$ independent candidate paths.
\end{proof}

\subsection{Controlling type 2 from conversions}
\label{sec:backtracks}

	In this section we consider how to control the spread of type 2 from conversions.
	The idea is that we may set $\rho$ so small so that in good boxes, no conversions take place until type 1 can spread far down the tree with high probability.
	However, we must also control for conversions that occur outside of good boxes.
	For example, type 1 may spread fast through a good box and trigger an instantaneous conversion just outside it, that in turn causes the spread of type 2.
	This spread may go back through the good box and prevent highways from being occupied by type 1.
	The aim of this section is to control how type 2 spreads from conversions outside of good boxes in such a manner that type 1 is not impeded on highways.
	
	Consider a box $B$ and suppose $|\mathcal{P}_k|>\alpha^r$ for some $\alpha\in(1,2)$ and $|\mathcal{S}_k|\geq2$, so that $\pi_1$ and $\pi_2$ are highways of $B$ with respective spines $s_1$ and $s_2$. 
	If more highways and spines are available, we simply ignore them.	
	The idea is that type 2 must traverse a distance of at least $k^2$ to convert a site outside $B$ and then occupy a site in $\pi_1\cup\pi_2$, and so by the time type 1 occupies some $x\in\pi_1\cup\pi_2$, then it is able to propagate downwards before being occupied by type 2. 
	We make this notion rigorous below.
	
	For $x\in\pi_1\cup\pi_2$, let $D(x)$ be the set of sites up to distance $k^2$ down from $x$ that exclude all sites whose path to $x$ include an edge on the highways $\pi_1$ and $\pi_2$ or the spines $s_1$ and $s_2$ (call this collection of excluded sites $H_{k^2}(x;\pi_1,\pi_2,s_1,s_2)$), so
$$
	D(x) = V^{\downarrow}_{k^2}(x)\backslash H_{k^2}(x;\pi_1,\pi_2,s_1,s_2).
$$
	Let $\partial D(x) = \partial V_{k^2}(x)\cap D(x)$ and $D^*(x)$ be the event that all paths through $D(x)$ \emph{upwards} have passage time at least $10k$, so that
\begin{equation}
\label{eq:D_star}
	D^*(x) = \bigcap_{y\in \partial D(x)}\left\{T_u(y\to x)\geq 10k\right\}.
\end{equation}
	Intuitively, the event $D^*(x)$ guarantees that no upwards type 2 passage time through $D(x)$ is fast enough to allow type 2 to catch type 1 on $\pi_1\cup\pi_2$.
	It is important the event in \eqref{eq:D_star} is independent of the labelling sub-boxes as good or bad, as a good box is defined through the type 1 passage times and the \emph{downward} type 2 passage times.
	
	The following lemma allows us to prove that $D^*(x)$ occurs for all $x\in\pi_1\cup\pi_2$ with high probability, assuming the existence of highways $\pi_1,\pi_2$ and  spines $s_1,s_2$.
	
\begin{lemma}
\label{lem:backtracks}
	Let $r=k^6$.
	Assume $|\mathcal{P}_k|> \alpha^r$ for some $\alpha\in(1,2)$ and $|\mathcal{S}_k|\geq2$.
	Let $\pi_1$ and $\pi_2$ be highways with respective spines $s_1$ and $s_2$. 
	There exists a constant $c>0$ such that
$$
\mathbb{P}\left(\bigcap_{x\in\pi_1\cup\pi_2}D^*(x)\right)\geq 1 - e^{-ck^2\log k},
$$
for all $k$ large enough.
\end{lemma}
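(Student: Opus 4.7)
The plan is to prove Lemma~\ref{lem:backtracks} by a direct union bound over $x\in\pi_1\cup\pi_2$ and $y\in\partial D(x)$, combined with a sharp Chernoff estimate on the lower tail of $T_u(y\to x)$, which is a sum of $k^2$ i.i.d.\ Exponential$(\lambda)$ random variables and therefore concentrates around its mean $k^2/\lambda\gg 10k$.

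First I would separate the randomness carefully. The labelling of sub-boxes and the highways $\pi_1,\pi_2$ are measurable with respect to $\{T_1\}$ and $\{T_d\}$ in $B_1$; the spines $s_1,s_2$ and the event $|\mathcal{S}_k|\geq2$ additionally involve $T_1$ in $B_2$, $T_d$ on spine edges, and $T_u$ only on edges incident to but not lying inside the spines. On the other hand, $D(x)$ is defined by removing every descendant of $x$ whose geodesic to $x$ passes through a highway or spine edge, and in particular removes all spine sites and their off-spine children. Consequently, the upward passage times probed by $D^*(x)$ belong to edges lying strictly inside $D(x)$, and none of them has been touched by the construction of $\pi_1,\pi_2,s_1,s_2$ or by the good/bad labelling. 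Conditional on the realised highways and spines, these $T_u$'s remain i.i.d.\ Exponential$(\lambda)$, so $\mathbb{P}(D^*(x)^c)$ can be estimated using their unconditioned marginal law.

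Next, for fixed $x$ and $y\in\partial D(x)$, $T_u(y\to x)$ is a sum of exactly $k^2$ i.i.d.\ Exponential$(\lambda)$ variables, so for any $t>0$
$$
\mathbb{P}\bigl(T_u(y\to x)\leq 10k\bigr)\leq e^{10tk}\left(\frac{\lambda}{\lambda+t}\right)^{k^2}.
$$
Optimizing with $t$ of order $k$ yields an exponent of order $-k^2\log k$; concretely, for all $k$ large enough one obtains $\mathbb{P}(T_u(y\to x)\leq 10k)\leq e^{-k^2\log k/2}$. This is the crucial estimate: one has to pay $e^{-ck^2\log k}$ to force a sum of mean $k^2/\lambda$ all the way down to $10k$, and it is precisely this super-exponential decay that makes the remaining union bounds affordable.

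Finally, since $|\partial D(x)|\leq d^{k^2}$, a union bound over $y$ gives $\mathbb{P}(D^*(x)^c)\leq d^{k^2}e^{-k^2\log k/2}\leq e^{-k^2\log k/4}$ once $k$ is large compared to $d$. Because $|\pi_1\cup\pi_2|\leq 2(kr+1)=O(k^7)$, a further union bound over $x$ absorbs the polynomial factor into the exponent and yields $\mathbb{P}\bigl(\bigcup_{x}D^*(x)^c\bigr)\leq e^{-ck^2\log k}$ for some $c>0$ and all $k$ large, as required. The only slightly subtle step is the first one: verifying that conditioning on the existence and identity of the highways and spines does not bias the $T_u$'s appearing in $D^*(x)$. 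Once that independence is secured, the rest is a routine large-deviation computation.
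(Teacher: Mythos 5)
Your proof is correct and follows essentially the same route as the paper: a Chernoff lower-tail bound for the sum of $k^2$ i.i.d.\ Exponential$(\lambda)$ upward passage times with $t$ of order $k$, giving decay $e^{-ck^2\log k}$, then union bounds over the at most $d^{k^2}$ sites of $\partial D(x)$ and the $O(k^7)$ sites of $\pi_1\cup\pi_2$. Your preliminary discussion of why conditioning on the highways, spines and good/bad labelling does not bias the relevant $T_u$'s is a slightly more explicit treatment of a point the paper only remarks on briefly before the lemma, and does not change the argument.
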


\begin{proof}
	If $\gamma$ is a path of length $k^2$ and $\theta>0$, then
$$
	\mathbb{P}\left(T_u(\gamma)\leq 10k\right) = \mathbb{P}\left(\sum_{i=1}^{k^2}T_u(e_i)\leq 10k\right) \leq e^{10\theta k}\mathbb{E}\left[e^{-\theta T_u}\right]^{k^2},
$$
by a Chernoff bound argument, where $\left\{e_i\right\}_i$ is an enumeration of the edges on the path $\gamma$ and $T_u$ is a copy of an exponential random variable of rate $\lambda$. 
	By setting $\theta = k/10$, we deduce
$$
	\mathbb{P}\left(T_u(\gamma)\leq 10k\right) \leq \left(\tfrac{\lambda}{\lambda+k/10}\right)^{k^2}e^{k^2} = e^{-ck^2\log k},
$$
for some constant $c>0$.
	There are at most $d^{k^2}$ sites in $\partial D(x)$ and hence,  by the union bound,
$$	
	\mathbb{P}\left(\bigcup_{y\in\partial D(x)} \left\{T_u(y\to x)\leq 10k\right\}\right) \leq d^{k^2}e^{-ck^2\log k} \leq e^{-\tfrac{c}{2}k^2\log k},
$$
for large enough $k$.
	Recall the depth of the percolating structure in a box is $kr$ where $r=k^6$.
	By the union bound over all sites in $\pi_1\cup\pi_2$, 
$$
	\mathbb{P}\left(\bigcup_{x\in\pi_1\cup\pi_2}\bigcup_{y\in\partial D(x)} \left\{T_u(y\to x)\leq 10k\right\}\right) \leq 2k^7e^{-\tfrac{c}{2}k^2\log k},
$$
for all large enough $k$ and the result follows.
\end{proof}

\subsection{Good boxes}
\label{sec:good_box}

	In this section we put together the components constructed in the previous sections to rigorously define good boxes. 
	While the events discussed in previous sections only concerned the box containing the origin, the events can easily be translated to an arbitrary box when we partition $\mathbb{T}_d
$ into boxes. 
	Indeed, as we will see in this section, a box being good or bad only depends on events measurable with respect to said box. 
	Hence we will retain the notation from previous sections without introducing ambiguity.
	
	Fix a box $B$ and consider the following events. 
	The first two events concern the construction of highways down to depth $kr$, where we fix $r=k^6$ and $\alpha\in(1,2)$, and at least two spines, so that
$$
	\mathcal{G}_1= \left\{\left|\mathcal{P}_k\right|>\alpha^r \right\} \quad \mbox{and} \quad \mathcal{G}_2 = \left\{\left|\mathcal{S}_k\right|\geq2\right\}.
$$
	Assuming that $\mathcal{G}_1$ and $\mathcal{G}_2$ hold, let $\pi_1$ and $\pi_2$ be two highways to depth $kr$.
	We let $\mathcal{G}_3$ be the event that all sites on $\pi_1\cup\pi_2$ satisfy the condition in \eqref{eq:D_star}, so that
$$
	\mathcal{G}_3 = \bigcap_{x\in\pi_1\cup\pi_2}D^*(x).
$$
	The final ingredient is the event that no conversion occurs in the box until at least time $3(kr+k^2)$ has expired.
	This guarantees that type 1 is able to traverse a large distance before type 2 originating from that box is able to spread.
	Call this event $\mathcal{G}_4$, so that
$$
	\mathcal{G}_4 = \bigcap_{x\in B}\left\{\mathcal{I}_x\geq 3(kr+k^2)\right\},
$$
where we recall $\mathcal{I}_x$ is the conversion time to type 2 at site $x$.

	We define a box $B$ to be \emph{good} if $\cap_{j=1}^{4}\mathcal{G}_j$ holds, and \emph{bad} otherwise.

\begin{lemma}
\label{lem:prob_good}
We have
$$
	\lim_{k\to\infty}\lim_{\rho\downarrow 0}\mathbb{P}\left(B\emph{ is good}\right) = 1.
$$
\end{lemma}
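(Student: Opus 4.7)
The plan is to combine the three probabilistic lemmas of Sections~\ref{sec:perc}--\ref{sec:backtracks} with an elementary tail estimate on the conversion clocks, via a union bound over the four defining events. Concretely, I would write
$$
\mathbb{P}(B\text{ is good}) \;\geq\; 1 - \sum_{j=1}^4 \mathbb{P}(\mathcal{G}_j^c),
$$
and then observe that $\mathcal{G}_1$, $\mathcal{G}_2$ and $\mathcal{G}_3$ are measurable with respect to the passage-time clocks $\{t_{1,e}\}$, $\{t_{u,e}\}$, $\{t_{d,e}\}$ only, whereas $\mathcal{G}_4$ is measurable with respect to the conversion clocks $\{\mathcal{I}_x\}_{x\in B}$. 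By construction these two families are independent, and crucially $\mathcal{G}_1,\mathcal{G}_2,\mathcal{G}_3$ do not involve $\rho$ at all.

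For the first three events I would chain the earlier lemmas in the order in which they were proved. Since $r=k^6$ satisfies the polynomial hypothesis of Lemma~\ref{lem:depth_kr}, that lemma gives $\mathbb{P}(\mathcal{G}_1)\to1$ as $k\to\infty$. On $\mathcal{G}_1$, Lemma~\ref{lem:good_spine} yields $\mathbb{P}(\mathcal{G}_2\mid\mathcal{G}_1)\to1$. Then, on $\mathcal{G}_1\cap\mathcal{G}_2$, fix a canonical choice of two spines $\pi_1,\pi_2$ (and their underlying highways); Lemma~\ref{lem:backtracks} applied to this choice furnishes $\mathbb{P}(\mathcal{G}_3^c\mid\mathcal{G}_1\cap\mathcal{G}_2)\leq e^{-ck^2\log k}$. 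Together these give $\mathbb{P}(\mathcal{G}_1\cap\mathcal{G}_2\cap\mathcal{G}_3)\to1$ as $k\to\infty$, uniformly in $\rho$.

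For $\mathcal{G}_4$, independence of the i.i.d.\ Exponential($\rho$) conversion clocks and the crude bound $|B|\leq d^{kr+k^2+1}$ give
$$
\mathbb{P}(\mathcal{G}_4) \;=\; \exp\!\bigl(-3\rho(kr+k^2)|B|\bigr),
$$
which, for \emph{each fixed} $k$, tends to $1$ as $\rho\downarrow0$. Assembling everything and sending $\rho\downarrow0$ first — which kills $\mathbb{P}(\mathcal{G}_4^c)$ without perturbing the three passage-time probabilities — and then letting $k\to\infty$ to kill the remaining three terms yields the stated double limit. I do not anticipate any substantive obstacle: the order of limits in the statement is precisely what is dictated by the argument, because $|B|$ grows with $k$ and so $\rho$ must be chosen small relative to $k$ in order to prevent any premature conversion anywhere in $B$ within the deterministic window $3(kr+k^2)$.
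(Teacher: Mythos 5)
Your proposal is correct and follows essentially the same route as the paper: decompose the event $\{B \text{ is good}\}$ into $\mathcal{G}_1,\dots,\mathcal{G}_4$, handle $\mathcal{G}_1$, $\mathcal{G}_2$ (given $\mathcal{G}_1$) and $\mathcal{G}_3$ (given $\mathcal{G}_1\cap\mathcal{G}_2$) by Lemmas~\ref{lem:depth_kr}, \ref{lem:good_spine} and \ref{lem:backtracks}, note these are $\rho$-independent passage-time events, and use independence of the conversion clocks to send $\rho\downarrow0$ first so that $\mathbb{P}(\mathcal{G}_4)\to1$ for each fixed $k$, then let $k\to\infty$. The only cosmetic differences are your union bound in place of the paper's product of conditional probabilities (the paper additionally remarks that $\mathcal{G}_3$ depends only on the upward type-2 clocks, hence is independent of $\mathcal{G}_1\cap\mathcal{G}_2$) and a harmless labelling slip calling $\pi_1,\pi_2$ spines rather than highways.
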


\begin{proof}
	As the conversion times are independent of the passage times, we may express the probability of a box being good as
$$
	\mathbb{P}\left(B\mbox{ is good}\right) = \mathbb{P}\left(\cap_{j=1}^{4}\mathcal{G}_j\right) = \mathbb{P}\left(\mathcal{G}_1\right)\mathbb{P}\left(\mathcal{G}_2|\mathcal{G}_1\right)\mathbb{P}\left(\mathcal{G}_3|\mathcal{G}_1\cap\mathcal{G}_2\right)\mathbb{P}\left(\mathcal{G}_4\right).
$$
	As the event $\mathcal{G}_3$ only observes the upwards type 2 passage times, then it is independent of $\mathcal{G}_1\cap\mathcal{G}_2$. 
	The result then follows by Lemma~\ref{lem:depth_kr}, Lemma~\ref{lem:good_spine} and Lemma~\ref{lem:backtracks}.

\end{proof}

\subsection{Proof of Theorem~\ref{thm:crit_lambda_tree}}
\label{sec:proof_thm_tree}

	In this section we prove Theorem~\ref{thm:crit_lambda_tree}.
	Before proceeding with the proof, we introduce the following notion that will ease exposition. 
	If a site $z$ is occupied by type 2 because it converts to type 2 due to its own conversion time expiring after being occupied by type 1, then we say $z$ is its own \emph{progenitor}. 
	Otherwise, $z$ was occupied by type 2 by the spread of type 2 from a neighbouring site, $z'$ say. 
	If $z'$ is its own progenitor, then we say it is the progenitor for $z$ too. 
	If $z'$ is not its own progenitor, then we can iterate this procedure until we find a site that is the progenitor for $z$ and itself. 
	In general, we write $p(z)$ for the progenitor for $z$ and $p_z$ as the path from $p(z)$ to $z$.
	Note a progenitor must be occupied by type 1 at some time due to being occupied by type 2 through its own conversion.
	As the passage times are exponentially distributed, the progenitor is almost surely unique.

\begin{proof}[Proof of Theorem~\ref{thm:crit_lambda_tree}]
	Let $\varepsilon$ be small enough and $k$ large enough so that Lemma~\ref{lem:prob_1box} holds, and fix $\lambda<1+\varepsilon$.
	We construct an infinite path from $\sigma$ that only passes through highways and spines of good boxes with positive probability as follows.
	Consider the box up to depth $kr+k^2$ with root $\sigma$.
	If this box is good, there exists at least two sites at depth $kr+k^2$ that are connected to the root $\sigma$ via highways and spines of this box.
	Considering these sites as roots of boxes to depth $2(kr+k^2)$, by observing if these boxes are good or bad, we deduce a lower bound on the number of sites connected to $\sigma$ only through highways and spines of good boxes.
	We can continue this procedure $n$ times to attempt to find sites at depth $n(kr+k^2)$ from the origin that only pass through the highways and spines of good boxes.
	As boxes constructed this way are independently good or bad, if we set $k$ large enough and then $\rho$ small enough so that
\begin{equation}
\label{eq:perc_prob}
	\mathbb{P}\left(V_{kr+k^2}\mbox{ is good}\right)>1/2,
\end{equation}
this procedure does not terminate with positive probability.
	If this procedure does not terminate, there exists an infinite path $\gamma$ from $\sigma$ that only passes through the highways and spines of good boxes.
	We aim to prove the existence of $\gamma$ implies type 1 survives, which would complete the proof.
	
	Suppose for contradiction there exists an infinite path from $\sigma$ that only passes through the highways and spines of good boxes, $\gamma$ say, and type 1 does not occupy every site on $\gamma$.
	Then there must exist some good box $B_0$ such that $\gamma$ contains a highway and spine of $B_0$,  type 1 occupies some site in $B_0\cap\gamma$ but not every site in $B_0\cap\gamma$.
	
	Write $B_0\cap\gamma=\pi\cup s$ for the respective highway $\pi$ and spine $s$ of $B_0$, and let $\sigma_{B_0}$ denote the root of $B_0$.
	Suppose there exists $z\in\pi\cup s$ that is never occupied by type 1.
	Without losing generality, we may assume $z$ is the closest such site to $\sigma_{B_0}$.
	Recall $p(z)$ is the progenitor of $z$ and $p_z$ is the path from $p(z)$ to $z$. 
	
	First consider the case $p(z)\in B_0$ (see Figure~\ref{fig:progin} for an illustration).
	Then 
\begin{equation}
\label{eq:cont_conv}
	\tau_2(z) > \tau_1(p(z)) + 3(kr+k^2) > \tau_1(\sigma_{B_0}) + 3(kr+k^2),
\end{equation}
where in the first inequality we recall $B_0$ is good and the second follows as type 1 must occupy $\sigma_{B_0}$ before $p(z)$ by construction.
	As $B_0$ is good, then
\begin{equation}
\label{eq:cont_conv2}
	T_1(\sigma_{B_0}\to z) < (1-\varepsilon)(kr+k^2).
\end{equation}

\begin{figure}
     \centering
     \begin{subfigure}[b]{0.3\textwidth}
         \centering
         \includegraphics[width=\textwidth]{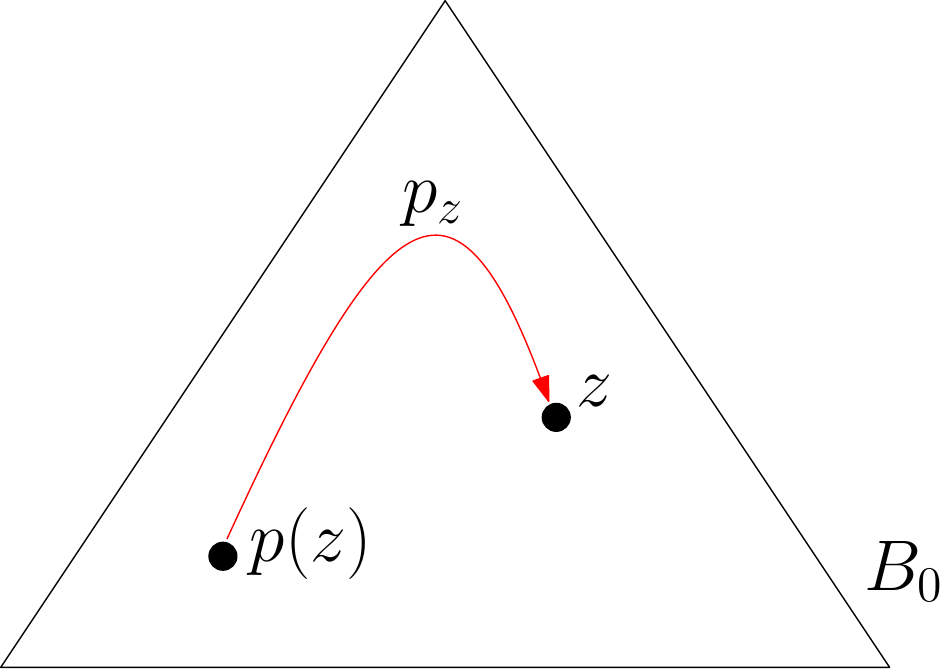}
         \caption{$p(z)\in B_0$}
         \label{fig:progin}
     \end{subfigure}
     \hfill
     \begin{subfigure}[b]{0.3\textwidth}
         \centering
         \includegraphics[width=\textwidth]{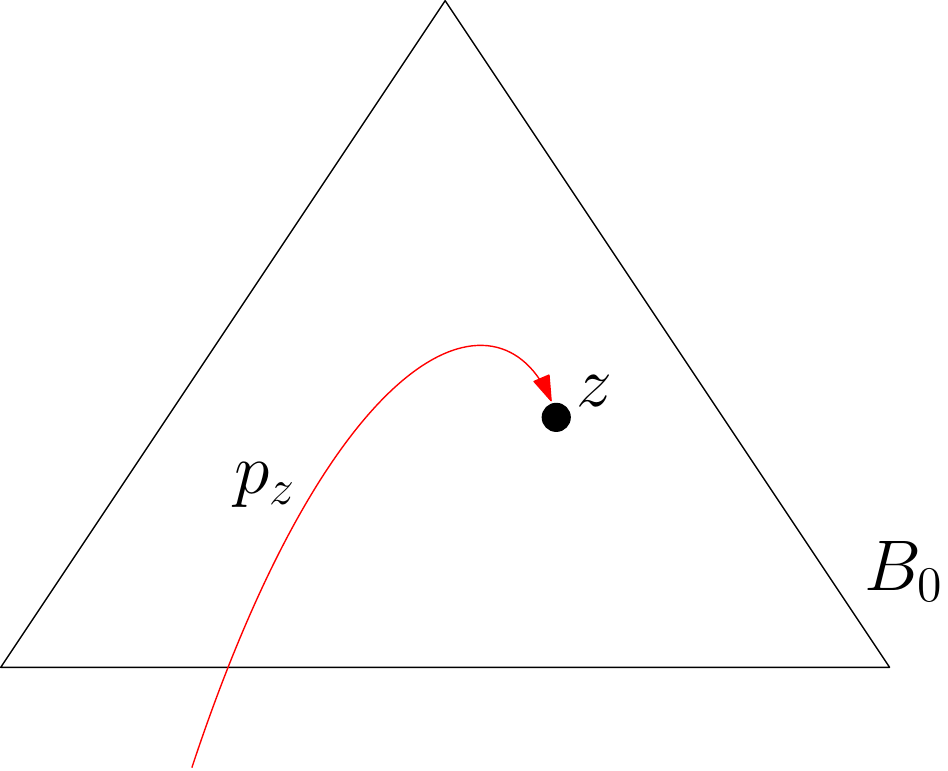}
         \caption{$p_z\cap \partial V_{kr+k^2}^{\downarrow}(\sigma_{B_0})\neq\emptyset$}
         \label{fig:progout1}
     \end{subfigure}
     \hfill
     \begin{subfigure}[b]{0.3\textwidth}
         \centering
         \includegraphics[width=\textwidth]{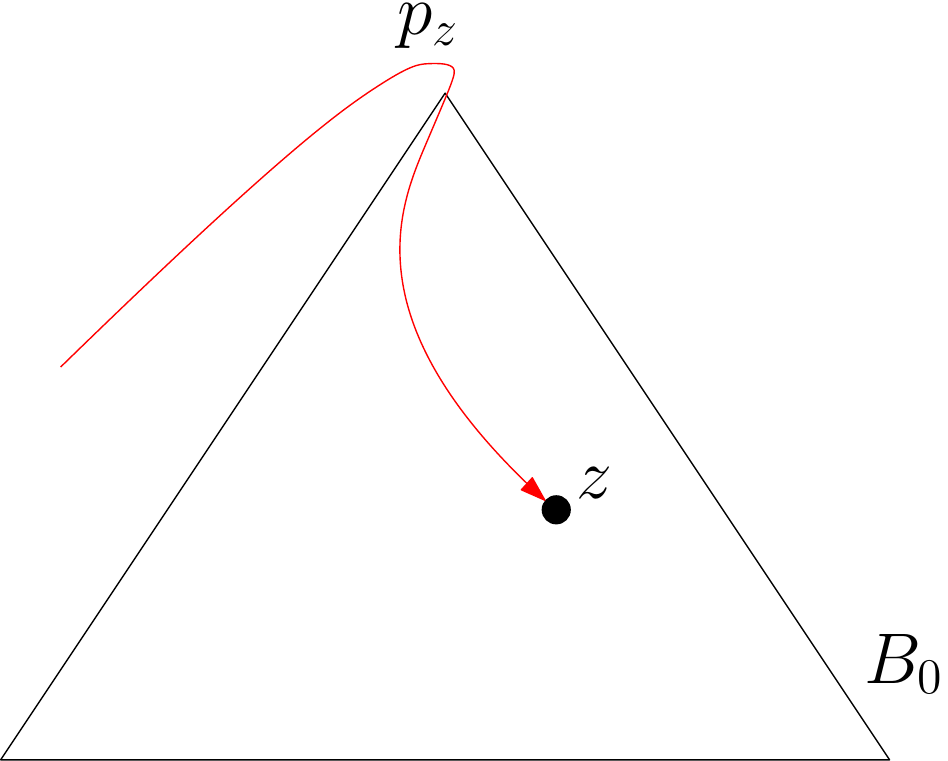}
         \caption{\centering $p_z\cap \partial V_{kr+k^2}^{\downarrow}(\sigma_{B_0})=\emptyset$ and $p(z)\notin B_0$}
         \label{fig:progout2}
     \end{subfigure}
        \caption{The three cases for the nature of $p_z$, the path from $p(z)$ to $z$. The red line is the path $p_z$. The top of the triangle is the root $\sigma_{B_0}$ and the base represents the set $\partial V_{kr+k^2}^{\downarrow}(\sigma_{B_0})$.}
        \label{fig:progcase}
\end{figure}

	Comparing \eqref{eq:cont_conv} and \eqref{eq:cont_conv2}, we deduce  
$$
	\tau_1(z) < \tau_1(\sigma_{B_0}) + (1-\varepsilon)(kr+k^2) < \tau_2(z),
$$
which contradicts the assumption $z$ is never occupied by type 1.
	Hence $p(z)\notin B_0$.
	
	Now consider the case $p_z\cap \partial V_{kr+k^2}^{\downarrow}(\sigma_{B_0})\neq\emptyset$ (see Figure~\ref{fig:progout1}).
	Let $z'\in\pi\cup s$ be such that 
$$
	d(\sigma_{B_0},z') = \min_{y\in B_0\cap\gamma\cap p_z}d(\sigma_{B_0},y).
$$
	It is immediate $d(\sigma_{B_0},z')\leq d(\sigma_{B_0},z)$ through the construction of the progenitor.
	
	First consider the case $z\in\pi$.
	Then
$$
	\tau_2(z) > \tau_2(z') + \tfrac{(1-\varepsilon^2)k}{\lambda}\left(\text{SB}(z,z')-2\right)^+,
$$
where SB$(z,z')$ is the number of sub-boxes that intersect the path from $z'$ to $z$ and for $c\in\mathbb{R}$, we write $(c)^+=\max\left\{c,0\right\}$.
	Recalling $\lambda<1+\varepsilon$, \eqref{eq:D_star} and that $B_0$ is good, we deduce
\begin{equation}
\label{eq:cont_below_pi}
	\tau_2(z) > \tau_1(z') + 10k +(1-\varepsilon)\left(\text{SB}(z,z')-2\right)^+k.
\end{equation}
	Similarly, recalling $\pi$ is a highway and $B_0$ is good,  we deduce
\begin{equation}
\label{eq:cont_below_pi1}
	T_1(z'\to z) < (1-\varepsilon)\text{SB}(z,z')k.
\end{equation}
By comparing \eqref{eq:cont_below_pi} and with \eqref{eq:cont_below_pi1}, we observe
$$
	\tau_1(z) < \tau_1(z') + (1-\varepsilon)\text{SB}(z,z')k < \tau_2(z),
$$	
	which contradicts the assumption $z$ is never occupied by type 1.
	
	Now consider the case $z\in s$ and $z\notin \pi$.
	By similar considerations to the previous case, 
\begin{equation}
\label{eq:cont_below_s}
	\tau_2(z) > \tau_1(z')+(1-\varepsilon)\left(\text{SB}(z,z')-2\right)^+k + k^3,
\end{equation}
where the $k^3$ term is because at least one edge incident to the spine $s$ must be traversed for type 2 to propagate to $z$.
	Note that the function SB only counts sub-boxes up to depth $kr$ in a box and not up to depth $kr+k^2$.
	Hence
\begin{equation}
\label{eq:cont_below_s1}
	T_1(z'\to z) < (1-\varepsilon)\text{SB}(z,z')k + (1-\varepsilon)k^2,
\end{equation}
where the $ (1-\varepsilon)k^2$ term is from the construction of the spine.
	By comparing \eqref{eq:cont_below_s} with \eqref{eq:cont_below_s1}, we have
$$
	\tau_1(z) < \tau_1(z') +(1-\varepsilon)\text{SB}(z,z')k + (1-\varepsilon)k^2 < \tau_2(z),
$$
which again gives a contradiction.
	
	Finally, consider the case $p_z\cap \partial V_{kr+k^2}^{\downarrow}(\sigma_{B_0})=\emptyset$ and $p(z)\notin B_0$ (see Figure~\ref{fig:progout2}).
	Under this assumption, $p_z$ contains $\sigma_{B_0}$.
	Moreover,  as $p(z)\notin B_0$, $p_z$ also contains a site neighbouring $\sigma_{B_0}$ in a spine of a good box that $\gamma$ passes through before entering $B_0$.
	Call this site $u$.
	As $u$ is contained in a spine of a good box, then
\begin{equation}
\label{eq:tau1_root}
\tau_1(\sigma_{B_0}) < \tau_1(u) + (1-\varepsilon)k^2,
\end{equation}
where we observe $\tau_1(u)<\infty$ as $\tau_1(\sigma_{B_0})<\infty$.
	The downwards passage time from $u$ to $\sigma_{B_0}$ is at least $k^3$ and thus
\begin{equation}
\label{eq:tau2_root}
\tau_2(\sigma_{B_0}) > \tau_2(u) + k^3 > \tau_1(u) + k^3 > \tau_1(\sigma_{B_0}) + k^3 - (1-\varepsilon)k^2,
\end{equation}
where in the final inequality we use \eqref{eq:tau1_root}.
	If $z\in\pi$,  by considering the type 2 passage times from $\sigma_{B_0}$ to $z$ along the highways, we deduce
\begin{equation}
\label{eq:cont_final}
	\tau_2(z) > \tau_1(\sigma_{B_0}) + k^3 - (1-\varepsilon)k^2 + (1-\varepsilon)\left(\text{SB}(\sigma_{B_0},z)-2\right)^+k.
\end{equation}
	By construction of the highway $\pi$, we have
\begin{equation}
\label{eq:cont_final1}
	T_1(\sigma_{B_0}\to z) < (1-\varepsilon)\text{SB}(\sigma_{B_0},z)k.
\end{equation}
	From \eqref{eq:cont_final} and \eqref{eq:cont_final1}, we have
$$
	\tau_1(z) < \tau_1(\sigma_{B_0}) + (1-\varepsilon)\text{SB}(\sigma_{B_0},z)k < \tau_2(z).
$$
	However, this contradicts the assumption $z$ is never occupied by type 1.
	A similar argument can be used if $z\in s$ and $z\notin \pi$ as in the previous case.
	As we have considered all possible cases for $z$ and $p(z)$, the contradiction is established and the proof is complete.
\end{proof}

\section{Behaviour on $\mathbb{Z}^d$}
\label{sec:lattice}

	In this section we consider the model on $\mathbb{Z}^d$.
	The model can be constructed in the exact same way as before except we no longer distinguish between upwards and downwards type 2 passage times.
	That is, we define $\left\{t_{1,e}\right\}_{e\in E(\mathbb{Z}^d)}$ and $\left\{t_{2,e}\right\}_{e\in E(\mathbb{Z}^d)}$ to be i.i.d.\ collections of exponentially distributed random variables of rate 1 and rate $\lambda$ on the edges of $\mathbb{Z}^d$, respectively.
	
	Type 1 spreads according to the $\left\{t_{1,e}\right\}_{e\in E(\mathbb{Z}^d)}$ passage times to vacant sites.
	Once a site $z$ is occupied by type 1, it attempts to convert to type 2 after waiting $\mathcal{I}_z$ time, where $\left\{\mathcal{I}_x\right\}_{x\in \mathbb{Z}^d}$ is an i.i.d.\ collection of exponentially distributed random variables of rate $\rho>0$.
	Type 2 spreads according to the $\left\{t_{2,e}\right\}_{e\in E(\mathbb{Z}^d)}$ passage times to vacant sites and sites occupied by type 1.	
	
\subsection{Proof of Theorem~\ref{thm:lattice}}
\label{sec:lattice1}

	In order to prove Theorem~\ref{thm:lattice}, we first recall some classical results for first passage percolation on $\mathbb{Z}^d$.
	
	Given sites $x,y\in\mathbb{Z}^d$, define the \emph{passage time from $x$ to $y$} as the random variable
$$
	T_1(x,y) = \inf_{\gamma\in\Gamma_{x,y}}T_1(\gamma)
$$
where $T_1$ is as defined in \eqref{eq:FPP_sum} and $\Gamma_{x,y}$ is the set of all finite paths from $x$ to $y$.
	For $t\geq0$, let
$$
	B(t) = \left\{x\in\mathbb{Z}^d:T_1(0,x)\leq t\right\}
$$
be the ball of radius $t$ centred at the origin under the metric induced by the exponential passage times of rate 1.
	The idea of the shape theorem is $B(t)$ converges to a deterministic shape once linearly rescaled in time.
	To make sense of rescaling, let 
$$
	\tilde{B}(t) = \left\{x+[-\tfrac{1}{2},\tfrac{1}{2})^d:T_1(0,x)\leq t\right\}
$$
be the set of sites in $B(t)$ considered as the centre of a unit cube in $\mathbb{R}^d$.

\begin{theorem}[Richardson \cite{rich}]
\label{thm:shape}
There exists a deterministic, convex, compact set $\mathcal{B}_1\subset\mathbb{R}^d$ such that for all $\varepsilon>0$
$$
	\mathbb{P}\left((1-\varepsilon)\mathcal{B}_1\subset\frac{\tilde{B}(t)}{t}\subset (1+\varepsilon)\mathcal{B}_1 \emph{ for all large }t\right)=1.
$$
The set $\mathcal{B}_1$ is called the limit shape.
\end{theorem}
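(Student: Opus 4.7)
The plan is to follow the classical subadditive--ergodic route to the shape theorem: first construct a deterministic time constant $g(x) = \lim_n T_1(0, nx)/n$ via Kingman's theorem, then verify that $g$ extends to a norm on $\mathbb{R}^d$, and finally upgrade pointwise radial convergence to the uniform shape convergence stated.

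For the first step, I would fix $x \in \mathbb{Z}^d \setminus \{0\}$ and apply Kingman's subadditive ergodic theorem to the array $X_{m,n} := T_1(mx, nx)$ for $0 \leq m < n$. Subadditivity $X_{0,n} \leq X_{0,m} + X_{m,n}$ is immediate from the triangle inequality for infimum-of-path-times, and the family is stationary and ergodic under the shift $m \mapsto m + 1$ because the edge weights $\{t_{1,e}\}$ are i.i.d.\ exponentials. Since $\mathbb{E}[T_1(0, x)] \leq \|x\|_1 \mathbb{E}[t_{1,e}] < \infty$, Kingman's theorem yields $T_1(0, nx)/n \to g(x)$ almost surely and in $L^1$ for some deterministic $g(x) \in [0, \infty)$. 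Positive homogeneity $g(qx) = qg(x)$ for $q \in \mathbb{Q}_{>0}$ extends $g$ to $\mathbb{Q}^d$.

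Next, I would promote $g$ to a norm on $\mathbb{R}^d$. Subadditivity $g(x + y) \leq g(x) + g(y)$ descends from the path-triangle inequality; symmetry $g(-x) = g(x)$ follows from the lattice reflection invariance combined with the i.i.d.\ law of the edge weights; and $g$ is Lipschitz with respect to $\|\cdot\|_1$ because $g(x) \leq \mathbb{E}[T_1(0, x)] \leq C\|x\|_1$, which allows continuous extension to $\mathbb{R}^d$. The crucial positivity $g(x) > 0$ for $x \neq 0$ comes from a Chernoff estimate: the number of self-avoiding nearest-neighbour paths of length $n$ starting at $0$ is at most $(2d)^n$, and the passage time along any fixed such path is a sum of $n$ i.i.d.\ $\mathrm{Exp}(1)$ variables, so $\mathbb{P}(T_1(\gamma) \leq cn) \leq (ec)^n$ for small $c>0$; choosing $c$ so that $2d \cdot ec < 1$ and applying Borel--Cantelli bounds $T_1(0, x_n)/\|x_n\|_1$ uniformly away from zero. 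Consequently $\mathcal{B}_1 := \{x \in \mathbb{R}^d : g(x) \leq 1\}$ is a compact, convex body with non-empty interior.

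Finally, to pass from radial pointwise convergence to the uniform shape statement, I would cover the unit $g$-sphere by a finite $\delta$-net $\{y_1, \ldots, y_N\}$ and combine the pointwise convergences $T_1(0, \lfloor t y_i \rfloor)/t \to g(y_i) = 1$ with the Lipschitz bound $|T_1(0, x) - T_1(0, y)| \leq T_1(x, y)$ and a tail estimate on $\max_{\|z - x\|_1 \leq \delta t} T_1(x, z)$. The main obstacle is the outer inclusion $\tilde{B}(t)/t \subset (1+\varepsilon)\mathcal{B}_1$: a naive union bound over $\mathbb{Z}^d$ diverges, so one must balance the exponential growth of the number of candidate paths against the exponential decay of the lower tail of their passage times. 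This is precisely where the exponential moments of the $\mathrm{Exp}(1)$ weights enter essentially, and a standard chaining argument (as in Cox--Durrett, Kesten) then promotes the pointwise convergence into the almost sure sandwich $(1-\varepsilon)\mathcal{B}_1 \subset \tilde{B}(t)/t \subset (1+\varepsilon)\mathcal{B}_1$ for all large $t$, finishing the proof.
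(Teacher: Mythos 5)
Your proposal is correct and follows exactly the classical route the paper itself points to: the paper does not prove Theorem~\ref{thm:shape} (it is cited from Richardson), but it notes the proof ``relies on subadditivity arguments through Kingman's subadditive ergodic theorem,'' which is precisely your plan of time constant via Kingman, norm properties with the path-counting lower bound for strict positivity, and the Cox--Durrett/Kesten chaining step to upgrade radial convergence to the uniform shape statement.
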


	The proof of Theorem~\ref{thm:shape} relies on subadditivity arguments through Kingman's subadditive ergodic theorem \cite{kingman1973subadditive}.
	Consequently, the exact limiting shape is not known.
	Richardson's shape theorem has been extended to more general passage times by Cox and Durrett \cite{cox1981some} and recent results about the limit shape can be found in \cite{auffinger201750}.

	By time scaling, if one replaced rate 1 passage times with rate $\lambda$ passage times for some $\lambda>0$, then there would be a limit shape $\mathcal{B}_\lambda$ such that $\mathcal{B}_{\lambda}=\lambda\mathcal{B}_1$.
	
	To prove Theorem~\ref{thm:lattice}, we first need to prove the set of sites ever occupied by type 1 is contained in a first passage percolation process of rate strictly less than one.
	This is the content of the following lemma.
	Let $\eta_1(t)$ (resp.\ $\eta_2(t)$) be the set of sites ever occupied by type 1 (resp.\ type 2) up to time $t$.
	
\begin{lemma}
\label{lem:vdbK}
	Consider the model on $\mathbb{Z}^d$ with $d\geq2$ and $\rho,\lambda>0$.
	There exists $\kappa=\kappa(\rho,d)>0$ such that for all $\varepsilon>0$,
$$
	\eta_1(t) \subset (1+\varepsilon)(1-\kappa)t\mathcal{B}_1
$$
for all large enough $t$, almost surely.
\end{lemma}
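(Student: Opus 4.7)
The plan is to dominate $\eta_1(t)$ by the ball of an iid first passage percolation whose time constant is strictly larger than that of the rate-$1$ process, and then apply Theorem~\ref{thm:shape}. First I would characterise type $1$'s reach: a site $y$ belongs to $\eta_1(t)$ if and only if there is a self-avoiding path $\gamma=(0=x_0,x_1,\ldots,x_n=y)$ with $\sum_{i=1}^n t_{1,x_{i-1}x_i}\le t$ and $t_{1,x_{i-1}x_i}<\mathcal{I}_{x_{i-1}}$ for every $i$, so that type $1$ actually crosses each edge before converting at the preceding vertex. Call such a $\gamma$ \emph{valid} and let $\hat B(t)$ denote the set of endpoints of valid paths whose total passage time is at most $t$; then $\eta_1(t)\subseteq\hat B(t)$.

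Next I would decouple the shared conversion times via FKG. Attach to each oriented edge $e=(x,y)$ an independent copy $\mathcal{I}'_e\sim\mathrm{Exp}(\rho)$ and set $\check t_e=t_{1,e}$ on $\{t_{1,e}<\mathcal{I}'_e\}$ and $\check t_e=+\infty$ otherwise; this defines an iid modified FPP $\check T$ whose passage time equals $\mathrm{Exp}(1+\rho)$ with probability $1/(1+\rho)$ and $+\infty$ with the complementary probability. Conditionally on the $t_{1,e}$'s, the events ``$\gamma$ is valid'' are increasing functions of $(\mathcal{I}_x)_x$ and are coupled through the shared $\mathcal{I}_x$ at vertices traversed by several candidate paths. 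Applying the FKG inequality to their decreasing complements and then integrating out the $t_{1,e}$'s yields
\[
	\mathbb{P}(y\in\eta_1(t))\le\mathbb{P}(y\in\check B(t))\qquad\text{for every }y\in\mathbb{Z}^d,\,t\ge 0.
\]

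A direct computation shows that the law of $\check t_e$ stochastically dominates $\mathrm{Exp}(1)$ strictly, the domination being genuine at infinity since $\mathbb{P}(\check t_e=+\infty)=\rho/(1+\rho)>0$. Hence by the van den Berg--Kesten strict monotonicity of the time constant in first passage percolation, there exists $\kappa=\kappa(\rho,d)>0$ such that the asymptotic shape of $\check T$ is contained in $(1-\kappa)\mathcal{B}_1$: in the supercritical regime $1/(1+\rho)>p_c^{\mathrm{bond}}(\mathbb{Z}^d)$ this is the Garet--Marchand shape theorem for FPP on the infinite cluster, while in the subcritical regime the cluster of $0$ is almost surely finite and $\check B(t)$ is a.s.\ bounded, making the conclusion immediate. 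Combining this shape bound with Kesten-type exponential concentration for iid FPP and a union bound over $y$ in the polynomial-size window $\|y\|_\infty\le Ct$ (the envelope $\eta_1(t)\subseteq B_1(t)$ furnishing $C$) gives summable exceedance probabilities along a geometric sequence $t_n\uparrow\infty$; Borel--Cantelli together with the monotonicity of $\eta_1(\cdot)$ in $t$ then finishes the proof.

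I expect the main obstacle to sit in the strict-monotonicity step: the classical van den Berg--Kesten argument is phrased for passage-time distributions supported on $[0,\infty)$, so one must either invoke a version adapted to laws with an atom at $+\infty$ (equivalently, FPP on a supercritical Bernoulli cluster) or work with a truncation $\check t_e\wedge M$, apply the classical result for each finite $M$, and verify that the resulting $\kappa(M)$ remains uniformly bounded away from $0$ as $M\to\infty$, together with the appropriate continuity of the Garet--Marchand shape theorem under this limit.
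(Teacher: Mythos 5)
Your overall architecture --- bound $\eta_1(t)$ by an i.i.d.\ modified FPP whose passage times stochastically dominate rate-$1$ exponentials, then invoke the van den Berg--Kesten strict shape comparison and a shape theorem --- is the same as the paper's. The genuine gap is your decoupling step, which is exactly the difficulty this lemma is about. FKG gives positive association of increasing events \emph{within one product measure}; it does not compare the probability of the union $\bigcup_\gamma\{\gamma \mbox{ valid and fast}\}$ under two different dependence structures (conversion clocks shared per vertex versus independent per oriented edge). Indeed, for a general \emph{increasing} event, replacing a shared $\mathcal{I}_x$ by independent copies on the outgoing edges can move the probability in either direction: conditionally on the passage times, the event that \emph{two} outgoing edges of $x$ are both crossed before conversion has probability $e^{-\rho\max(t_1,t_2)}$ under sharing, which is \emph{larger} than $e^{-\rho t_1}e^{-\rho t_2}$ under independence. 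So the inequality $\mathbb{P}(y\in\eta_1(t))\le\mathbb{P}(y\in\check B(t))$ does not follow from ``FKG applied to the decreasing complements''; to prove it you would need to exploit the specific structure that a self-avoiding path uses at most one outgoing edge of each vertex, so that, conditionally on everything else, the reach event is a disjunction in the outgoing-edge indicators at each vertex, and then run a vertex-by-vertex replacement argument (a comonotone coupling minimises a union with prescribed marginals). As written, the central comparison is unproved.

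The paper avoids this comparison altogether: it marks each site according to whether its conversion time is at least a large constant $K$, uses Liggett--Schonmann--Stacey to dominate the resulting $1$-dependent marked-edge field by an i.i.d.\ Bernoulli field of ``semi-marked'' edges \emph{independent of the passage times}, and then sets the type-1 passage time to $\infty$ on semi-marked edges whose exponential value exceeds $K$. This produces genuinely i.i.d.\ passage times that strictly dominate the rate-$1$ exponential while the type-1 cluster remains contained in the corresponding FPP ball, so van den Berg--Kesten applies directly. Two smaller remarks on your final paragraph: the worry about uniformity of $\kappa(M)$ as $M\to\infty$ is unnecessary, since truncating the dominating passage times at any single finite $M$ only speeds up the dominating process, so $\check B(t)\subset B_M(t)$ and one fixed $M$ (hence one fixed $\kappa$) suffices; and your ``if and only if'' characterisation of $\eta_1(t)$ is really only an inclusion (type 2 can block a valid fast path), though this is harmless since you only use the inclusion.
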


\begin{proof}
	The proof is consequence of van den Berg and Kesten \cite{van1993inequalities} (see \cite[Proposition 6.4]{sidoravicius2019multi} for the result in terms of the limit shape).
	Their result states if a distribution $F$ strictly dominates\footnote{We say a distribution $F$ \emph{stricly dominates} a distribution 
	$\tilde F$ if there exists a coupling between these two distributions under which 
	the random variable with distribution $F$ is larger than the random variable with distribution $\tilde F$ with probability $1$.} 
	another distribution $\tilde{F}$, then the limiting shape under $F$ is strictly contained in the limiting shape under $\tilde{F}$, under some natural conditions on $F$ and $\tilde{F}$.
	The idea is to prove type 1 is contained in a process whose passage times strictly dominate a first passage percolation process with exponential passage times of rate 1.
	However, there are subtle dependencies arising because the spread of type 1 is facilitated by the interplay of small type 1 passage times and large conversion times.
	We need to prove we can construct the model in a manner that decouples these two sources of randomness in order to apply a van den Berg--Kesten argument.

	For $K>0$, let $u(K)$ be the probability an exponential random variable of rate $\rho$ is at least $K$.
	Given $x\in\mathbb{Z}^d$, let $x$ be \emph{marked} with probability $1-u(K)$, and \emph{unmarked} with probability $u(K)$, independently of every other site.
	Define an edge $e$ to be marked if both of its endpoints are marked.
	The set of marked edges gives a 1-dependent percolation process.
	By Liggett, Schonmann and Stacey \cite{liggett1997domination}, for any $q\in(0,1)$, the set of marked edges stochastically dominates an i.i.d.\ Bernoulli percolation process of parameter $q$ for a large enough choice of $K$.
	The open edges according to this i.i.d.\ Bernoulli percolation process are defined to be \emph{semi-marked}.
	
	For each edge $e$, determine whether $e$ is semi-marked and sample a candidate type 1 passage time $\tilde{t}_{1,e}$ that is an exponential random variable of rate 1, independently of every other edge.
	If $e$ is not semi-marked, we let $f_{1,e}=\tilde{t}_{1,e}$.
	If $e$ is semi-marked, we set
$$
	f_{1,e} = \begin{cases}
	\tilde{t}_{1,e} & \mbox{if }\tilde{t}_{1,e}\leq K,\\
	\infty & \mbox{if }\tilde{t}_{1,e}> K.
	\end{cases}
$$
	Hence the evolution for type 1 is identical under $\left\{f_{1,e}\right\}_e$ and $\left\{\tilde{t}_{1,e}\right\}_e$ by this construction.
	The passage times given by $\left\{f_{1,e}\right\}_e$ are i.i.d.\ and are stochastically dominated by i.i.d.\ exponential passage times of rate 1.
	Hence we may apply van den Berg--Kesten and the result follows from Theorem~\ref{thm:shape}.
\end{proof}

	With Lemma~\ref{lem:vdbK} to hand, we are now in a position to prove Theorem~\ref{thm:lattice}.
	
\begin{proof}[Proof of Theorem~\ref{thm:lattice}]
	Let $\kappa$ be as in Lemma~\ref{lem:vdbK} so for any $\varepsilon>0$, 
$$
	\eta_1(t) \subset (1+\varepsilon)(1-\kappa)t\mathcal{B}_1,
$$
for all $t$ large enough, almost surely.
	By Theorem~\ref{thm:shape}, if we only consider the evolution of type 2 after the origin has been converted, for any $\varepsilon>0$, 
$$
	\eta_2(t)\supset (1-\varepsilon)t\mathcal{B}_\lambda = (1-\varepsilon)\lambda t \mathcal{B}_1,
$$
for all $t$ large enough, almost surely.
	If $\lambda>1-\kappa$ and we fix $\varepsilon$ small enough so
$$
	(1+\varepsilon)(1-\kappa) < (1-\varepsilon)\lambda,
$$
we deduce type 1 dies out almost surely as $\eta_1(t)\subset\eta_2(t)$ for all large enough $t$.
\end{proof}

\subsection{Sidoravicius--Stauffer percolation}
\label{sec:SSP}

	The proof of Theorem~\ref{thm:rho} relies on a coupling between our converting first passage percolation model with a random competition process called \emph{Sidoravicius--Stauffer percolation} (SSP).
	SSPs where introduced by Dauvergne and Sly \cite{dauvergne2021spread} as a streamlined version of FPPHE from \cite{sidoravicius2019multi}.
	The purpose of this section is to define SSPs and outline an important encapsulation theorem by Dauvergne and Sly.
	
	An SSP on $\mathbb{Z}^d$ consists of two competing growth processes $\mathfrak{R}(t)$ and $\mathfrak{B}(t)$ for $t\geq0$, called red and blue for clarity, respectively.
	Let $\overrightarrow{\text{E}}$ be the set of directed edges on $\mathbb{Z}^d$, so that
$$
	\overrightarrow{\text{E}} = \left\{(u,v):(u,v)\in E\left(\mathbb{Z}^d\right)\right\}.
$$
	To define an SSP we require the following.
\begin{itemize}
\item Functions $X_{\mathfrak{R}}:\overrightarrow{\text{E}}\to [0,1]$ and $X_{\mathfrak{B}}:\overrightarrow{\text{E}}\to [0,\infty)$ viewed as the clocks defining the growth of the red and blue process, respectively.
\item A collection of blue seeds $\mathfrak{B}_*\subset \mathbb{Z}^d$.
\item A parameter $\kappa>1$ such that $X_{\mathfrak{B}}(u,v)\leq \kappa$ for all $(u,v)\in \overrightarrow{\text{E}}$.
\end{itemize} 
	If $u$ and $v$ are both blue seeds, we take $X_{\mathfrak{B}}(u,v)=0$.
	That is, the blue process spreads instantaneously through connected components of blue seeds.
	For our purposes, the blue process spreads after waiting time $\kappa$ through every other edge.
	Hence we will always consider the special case where the blue process spreads according to the clocks given by
$$
	X_{\mathfrak{B}}:\overrightarrow{\text{E}}\to [0,\kappa] \mbox{ with } X_{\mathfrak{B}}(u,v) = \begin{cases} 0 & \mbox{if } u,v\mbox{ are blue seeds},\\
	\kappa & \mbox{otherwise}.
	\end{cases}
$$

	At time $t=0$, the red process only occupies the origin while the blue process is dormant in seeds.
	SSPs evolve in time through the following dynamics.
	Given a site $u$, let $T(u)$ be the earliest time the red or blue process occupied $u$ and $C(u)\in\{\mathfrak{R},\mathfrak{B}\}$ denote the colour of $u$ once it is occupied.
	Given an edge $(u,v)$, at time $T(u)+X_{C(u)}(u,v)$, the edge $(u,v)$ will ring and the process evolves in the following manner.
	If $T(v)<T(u)+X_{C(u)}(u,v)$, then the occupation is suppressed as $y$ has already been coloured by an invasion from another edge.
	Otherwise, we colour $v$ according to the following rules:
\begin{itemize}
\item If $C(u)=\mathfrak{R}$ and $v\in\mathfrak{B}_*$, then $C(v)=\mathfrak{B}$.
\item If $C(u)=\mathfrak{R}$ and $v\notin\mathfrak{B}_*$, then $C(v)=\mathfrak{R}$.
\item If $C(u)=\mathfrak{B}$, then $C(v)=\mathfrak{B}$.
\end{itemize}

	The construction of SSPs in \cite{dauvergne2021spread} allows for the red process to invade through blue sites in some circumstances due to caveats in their application.
	They couple SSPs with a spread of infection model and blue regions are only where they cannot guarantee the infection is moving fast enough.
	For our application, we do not need to consider these cases and so the above construction will suffice.
	
	We define the red (resp.\ blue) process to \emph{survive} if there is an infinite connected region of red (resp.\ blue) sites in the limit as $t\to\infty$.
	Otherwise we say the red (resp.\ blue) process \emph{dies out}.
	
	The following is an encapsulation result of Dauvergne and Sly \cite[Theorem 2.14]{dauvergne2021spread}, where the red process survives and encapsulates all blue regions with positive probability, 
	so long as $\kappa$ is large enough (so the blue process is substantially slower than the red process) and blue seeds are stochastically dominated 
	by an i.i.d.\ Bernoulli process of small enough parameter.

\begin{theorem}
\label{thm:enc_SS}
	Consider a random SSP $(\mathfrak{R},\mathfrak{B})$ on $\mathbb{Z}^d$ driven by potentially random clocks $X_{\mathfrak{R}},X_{\mathfrak{B}}$,  a collection of blue seeds $\mathfrak{B}_*$ and a constant parameter $\kappa>4000$.
	Suppose additionally $\mathfrak{B}_*$ is stochastically dominated by an i.i.d.\ Bernoulli process of parameter $p>0$.
	There exists a universal constant $c>0$ such that the probability the red process survives and the blue process dies out is at least $1-cp$.
\end{theorem}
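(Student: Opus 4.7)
The plan is to prove Theorem~\ref{thm:enc_SS} via a multi-scale renormalisation argument that exploits the very large speed gap between the red and blue processes. Since $X_{\mathfrak{R}}(e)\leq 1$ for every directed edge $e$ while $X_{\mathfrak{B}}(e)=\kappa>4000$ on every edge outside a blue-seed cluster, red advances at speed at least $1$ through red-or-empty regions, whereas blue, once ignited, advances at speed at most $1/\kappa$ outside seeds. The underlying intuition is that whenever red reaches the neighbourhood of a spatially confined blue-seed cluster, it has ample time to encircle the cluster and permanently cut it off from the rest of the lattice.

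Concretely, I would introduce a geometric hierarchy of length scales $L_0\ll L_1\ll L_2\ll\cdots$ with $L_{n+1}=ML_n$ for a fixed large constant $M$, and declare a box at scale $n$ to be \emph{good} if the blue seeds it contains are confined to a small number of well-separated sub-boxes at scale $n-1$, each of which is itself good. At scale $0$, a box is good precisely when it contains no blue seed, which by the Bernoulli($p$) domination hypothesis occurs with probability at least $1-c_0 p L_0^d$. The heart of the argument is a deterministic encapsulation lemma at each scale $n$: if a blue component lies inside a scale-$n$ box and red is allowed to approach through a surrounding shell of thickness $\gtrsim L_n$ consisting of good sub-boxes, then red encircles the cluster before blue can cross the shell, because red's first-passage time across such a shell is $O(L_n)$ while blue's is at least $c\kappa L_n$, and $\kappa$ is chosen extremely large.

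The multi-scale induction then shows that the set of good scale-$n$ boxes stochastically dominates a highly supercritical $1$-dependent site percolation process, which by a Liggett--Schonmann--Stacey comparison dominates Bernoulli percolation with parameter close to $1$. Iterating the encapsulation lemma along this percolating structure produces an infinite red cluster that surrounds only bounded blue components. The total failure probability is then controlled by the probability that some bad scale-$n$ event involves a neighbourhood of the origin; since the Bernoulli density of blue seeds is $p$ and all geometric constants are universal, this can be summed over scales to obtain a bound of order $p$, yielding the conclusion that red survives and blue dies out with probability at least $1-cp$.

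The main obstacle will be making the encapsulation step quantitative uniformly across all scales. One must show that typical fluctuations of the red first-passage metric across a shell of radius $L_n$ remain $o(\kappa L_n)$ with probability high enough to survive the union bound over all scales, and the recursive definition of \emph{good} must be arranged so that goodness at scale $n$ is measurable with respect to a bounded neighbourhood of the box, so that the induction does not degrade the comparison parameters. Chaining these estimates so that the total failure probability at the origin is genuinely linear in $p$, with a universal constant $c$ independent of $p$, is the most delicate piece of combinatorial bookkeeping in the argument.
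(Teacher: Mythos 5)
The paper does not actually prove this statement: it is quoted (in simplified form) from Dauvergne and Sly \cite{dauvergne2021spread}, Theorem 2.14, and is used purely as a black box in the proof of Theorem~\ref{thm:rho}. So there is no internal proof to compare against, and your proposal has to be judged as an attempt to reprove the Dauvergne--Sly encapsulation theorem. Your overall strategy --- a hierarchy of scales in which isolated blue seed clusters are encircled by red before they can escape, combined with a Liggett--Schonmann--Stacey comparison to highly supercritical percolation --- is indeed the route taken in \cite{dauvergne2021spread} and in the earlier encapsulation analysis of FPPHE in \cite{sidoravicius2019multi}.

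As it stands, however, the proposal is a plan rather than a proof, and it leaves precisely the hard steps unproved. The ``deterministic encapsulation lemma at each scale'' is asserted, not established, and the obstacle you single out (fluctuations of the red first-passage metric across a shell) is not the real one: since $X_{\mathfrak{R}}\leq 1$ edge by edge, red crosses a shell of thickness $L$ in time at most $L$ deterministically. The genuine difficulty is that red cannot pass through blue, so a blue cluster that locally outruns red can sever red's encircling paths, and nearby seed clusters (or failed encapsulations) can merge into larger blue regions that must be re-treated as bad regions at a higher scale. Defining scale-$n$ bad regions recursively from failed scale-$(n-1)$ encapsulations, showing the merged regions remain sparse and well separated with the right probability, and proving that red still surrounds them using only good lower-scale boxes is the core of the Dauvergne--Sly argument and is absent from your sketch. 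In addition, the claimed bound $1-cp$ with a universal constant requires quantitative decay, over scales, of the probability that the origin is absorbed into a scale-$n$ bad region, summable to $O(p)$; your scale-$0$ estimate of order $1-c_0pL_0^d$ plus an LSS comparison only yields goodness parameters close to $1$ and does not by itself produce that decay. Until the encapsulation lemma and the recursive merging bookkeeping are carried out, the proposal does not constitute a proof of the theorem.
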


	The statement of Theorem~\ref{thm:enc_SS} suffices for our purposes and is provided in a more detailed manner in \cite{dauvergne2021spread}.

\subsection{Proof of Theorem~\ref{thm:rho}}

	In this section we prove Theorem~\ref{thm:rho} and in doing so, partially answer Conjecture~\ref{conj:lattice}.
	We first prove $\rho_{\ell}>0$ through a coupling with an appropriate SSP.
	To facilitate this coupling, we need to provide an alternative construction of the model on $\mathbb{Z}^d$.
	
	Type 1 evolves according the passage times $\left\{t_{1,e}\right\}_{e\in E(\mathbb{Z}^d)}$ and converts to type 2 according to the conversion times $\left\{\mathcal{I}_x\right\}_{x\in\mathbb{Z}^d}$, as before.
	When a site $x$ is occupied by type 2, it attempts to spread to neighbouring sites, according to the passage times $\left\{t_{2,e}\right\}_{e\in E(\mathbb{Z}^d)}$.
	That is, if $x$ is first occupied by type 1 at time $s$ and $y\sim x$ is vacant at time $s+t_{2,xy}$, then $y$ is occupied by type 2 at time $s+t_{2,xy}$.
	If $y$ is occupied by type 1 at some time $s'\in[s,s+t_{2,xy})$, type 2 now attempts to occupy $x$ at time $s'+t_{3,xy}$ where $\left\{t_{3,e}\right\}_{e\in E(\mathbb{Z}^d)}$ is an i.i.d.\ collection of exponential random variables of rate $\lambda$.
	By the memoryless property of the exponential distribution, this construction is equivalent to how the model is defined before.
	
\begin{proof}[Proof of Theorem~\ref{thm:rho}: $\rho_{\ell}>0$.]
	Let $C$ be a large constant we fix later.
	Define a site $x$ to be a \emph{type 2-seed} if at least one of the following holds.
\begin{itemize}
\item There exists $y\sim x$ such that $t_{1,xy}\geq C$.
\item There exists $y\sim x$ such that $t_{2,xy}<C^2$.
\item There exists $y\sim x$ such that $t_{3,xy}<C^2$.
\item The conversion time at $x$ satisfies $\mathcal{I}_x<C^2$.
\end{itemize}

	We now construct the appropriate SSP required for the coupling argument.
	The blue seeds for the SSP are given by the type 2-seeds above.
	Define the clocks $X_{\mathfrak{R}}$ and $X_{\mathfrak{B}}$ governing the spread of the red and blue process as follows:
$$
	X_{\mathfrak{R}}:\overrightarrow{\text{E}}\to [0,C] \mbox{ with } X_{\mathfrak{R}}(x,y) = \min\{t_{1,xy},C\},
$$
and 
$$
	X_{\mathfrak{B}}:\overrightarrow{\text{E}}\to [0,C^2] \mbox{ with } X_{\mathfrak{B}}(x,y) = \begin{cases} 0 & \mbox{if } x,y\mbox{ are blue seeds},\\
	C^2 & \mbox{otherwise}.
	\end{cases}
$$
	Note the change from $X_\mathfrak{R}$ being bounded by 1 to being bounded by $C$ amounts to a time change and does not alter any arguments.
	The only edges where the red clock is equal to $C$ must have a type 2-seed at each endpoint and thus play no role in the evolution of the red process.
	
	We deduce if a site $x$ is not a type 2-seed and occupied by type 1, it is able to attempt to spread type 1 to all of its neighbours. 
	For example, if $y\sim x$, then
$$
	t_{1,xy}<\min_{z\sim x}\left\{t_{2,xz},t_{3,xz},\mathcal{I}_x\right\}
$$
	 via the construction of type 2-seeds and so the propagation of type 1 from $x$ cannot be blocked by the spread of type 2 or conversions.
	 This is precisely why we needed to define the passage times $\left\{t_{3,e}\right\}_{e\in E(\mathbb{Z}^d)}$, so the spread of type 2 from a neighbouring site does not block the spread of type 1.
	
	If there where no type 2-seeds, it is immediate type 1 survives as type 2 does not have the potential to block its spread.
	Through this construction, type 2 only can block type 1 through the spread from type 2-seeds.
	Sites that can potentially be blocked from type 1 through type 2-seeds are then the blue process while the remaining sites are red.
	Hence, proving the red process survives implies type 1 survives.

	It is easy to verify this construction yields a valid SSP in the language from \cite{dauvergne2021spread}.
	Moreover, by setting $C$ large enough and then $\lambda$ and $\rho$ small enough, the probability a site is a type 2-seed can be made arbitrarily small.
	The process of labelling sites as type 2-seeds defines a 1-dependent percolation process and so can be constructed to be stochastically dominated by an i.i.d.\ Bernoulli process of parameter $p$, for any $p\in(0,1)$ by Liggett, Schonmann and Stacey \cite{liggett1997domination}.
	This observation allows us to deduce from Theorem~\ref{thm:enc_SS} that for $\lambda$ and $\rho$ small enough, the red process survives and occupies infinitely many sites, and all connected components of the blue process are finite, with positive probability.
\end{proof}

	The proof that $\rho_u$ exists and is finite requires less machinery.

\begin{proof}[Proof of Theorem~\ref{thm:rho}: $\rho_u<\infty$]
%
	Define a site $x$ to be \emph{closed} if the minimum type 1 passage time on an edge incident to $x$ is greater than the time it takes $x$ to convert once occupied by type 1, so that
$$
	\min_{y\sim x}t_{1, xy} > \mathcal{I}_x.
$$ 
   Note that closed sites cannot pass type 1 to any of their neighbors.
	The process of labelling sites closed is a 1-dependent percolation process with
$$
	\lim_{\rho\to \infty}\mathbb{P}\left(x\mbox{ is closed}\right)=1.
$$
Through Liggett, Schonmann and Stacey \cite{liggett1997domination}, by setting $\rho$ large enough, we deduce closed sites stochastically dominate a supercritical i.i.d.\ Bernoulli percolation process. 
	Hence, for all large enough $\rho$, type 1 dies out almost surely as the origin is encapsulated by closed sites.
\end{proof}

\appendix

\section{Appendix: Standard large deviation results}

\begin{lemma}[Chernoff bounds for Poisson random variables]
\label{lem:chernoff}
	Let $P$ be a Poisson random variable of mean $\mu$.
	For any $\varepsilon\in(0,1)$,
$$
	\mathbb{P}\left(P < (1-\varepsilon)\mu\right) < \exp\left\{-\mu \varepsilon^2/2\right\}
$$
and 
$$
	\mathbb{P}\left(P > (1+\varepsilon)\mu\right) < \exp\left\{-\mu \varepsilon^2/4\right\}.
$$
For any $C>0$ and $\theta\in\mathbb{R}$, we have
$$
	\mathbb{P}\left(P > C\mu\right) \leq \exp\left\{-\mu\left(1-e^{\theta}+\theta C\right)\right\}.
$$
\end{lemma}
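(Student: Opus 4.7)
The plan is to apply the standard exponential Chernoff method, which reduces all three bounds to straightforward calculus exercises using the moment generating function of $P$. First, I would recall that for $P \sim \mathrm{Poisson}(\mu)$, a direct computation from the Poisson probability mass function yields $\mathbb{E}[e^{\theta P}] = \exp(\mu(e^\theta - 1))$ for every $\theta \in \mathbb{R}$. The exponential Markov inequality then gives, for any $\theta > 0$ and any threshold $a > 0$,
$$
\mathbb{P}(P > a) \leq e^{-\theta a}\mathbb{E}[e^{\theta P}] = \exp\bigl(\mu(e^\theta - 1) - \theta a\bigr).
$$
Setting $a = C\mu$ and rewriting the exponent as $-\mu(1 - e^\theta + \theta C)$ proves the third bound for $\theta > 0$; for $\theta \leq 0$ the right-hand side of the claimed inequality is at least $1$, so the bound is trivially valid, covering the full range $\theta \in \mathbb{R}$.

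For the upper tail bound with threshold $(1+\varepsilon)\mu$, I would optimise the above over $\theta > 0$: differentiating $\mu(e^\theta - 1) - \theta(1+\varepsilon)\mu$ and setting the derivative to zero yields $\theta = \ln(1+\varepsilon)$. Substituting back gives an exponent equal to $-\mu\bigl((1+\varepsilon)\ln(1+\varepsilon) - \varepsilon\bigr)$, and it remains only to verify the elementary inequality $(1+\varepsilon)\ln(1+\varepsilon) - \varepsilon \geq \varepsilon^2/4$ for $\varepsilon \in (0,1)$. This is a standard scalar calculus check: the left-hand side has Taylor expansion $\varepsilon^2/2 - \varepsilon^3/6 + O(\varepsilon^4)$ near zero, and a direct endpoint computation at $\varepsilon = 1$ (where it equals $2\ln 2 - 1 \approx 0.386 > 1/4$) together with monotonicity of the ratio $\bigl((1+\varepsilon)\ln(1+\varepsilon) - \varepsilon\bigr)/\varepsilon^2$ confirms the inequality uniformly on $(0,1)$.

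Symmetrically, for the lower tail I would apply Markov to $e^{\theta P}$ with $\theta < 0$, obtaining the bound $\exp\bigl(\mu(e^\theta - 1) - \theta(1-\varepsilon)\mu\bigr)$ on $\mathbb{P}(P < (1-\varepsilon)\mu)$. Optimising in $\theta$ gives $\theta = \ln(1-\varepsilon)$ and reduces the problem to checking $(1-\varepsilon)\ln(1-\varepsilon) + \varepsilon \geq \varepsilon^2/2$ for $\varepsilon \in (0,1)$, which once again follows from the Taylor expansion of the left-hand side, namely $\varepsilon^2/2 + \varepsilon^3/6 + O(\varepsilon^4)$, a series with non-negative coefficients.

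The only mildly delicate step in the entire argument is handling these two scalar inequalities, but in both cases the stated quadratic lower bound is weaker than the leading Taylor term plus a non-negative remainder (reflecting the convexity of $-\ln(1-x)$ and of $(1+x)\ln(1+x) - x$ on their respective domains), so there is no genuine obstacle. The whole proof is therefore essentially the two-line Chernoff template plus two short analytic verifications.
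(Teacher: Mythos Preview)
The paper does not actually prove this lemma; it is stated without proof in the appendix as a standard large deviation result. Your approach via the exponential Markov inequality and the Poisson moment generating function is exactly the standard derivation, and it is correct for the two tail bounds and for the third bound whenever $\theta \geq 0$.

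One small caveat: your claim that for $\theta \leq 0$ the right-hand side of the third bound is automatically at least $1$ is not quite right. Writing $f(\theta) = 1 - e^\theta + \theta C$, one has $f(0)=0$ and $f'(\theta) = C - e^\theta$, so when $0 < C < 1$ the function $f$ attains a strictly positive maximum at $\theta = \ln C < 0$, making the right-hand side strictly less than $1$ there. In fact the third inequality as literally stated (for all $\theta \in \mathbb{R}$ and all $C>0$) fails in that regime: for large $\mu$ and $C = 1/2$ one has $\mathbb{P}(P > \mu/2) \to 1$, while the claimed bound at $\theta = \ln(1/2)$ tends to $0$. This is a minor imprecision in the lemma's statement rather than a flaw in your method; the paper only ever applies the bound with $\theta > 0$ and $C$ large, where your argument goes through verbatim.
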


\bibliographystyle{abbrv}

\bibliography{references}

\begin{thebibliography}{10}

\bibitem{addario2009minima}
L.~Addario-Berry and B.~Reed.
\newblock Minima in branching random walks.
\newblock {\em Annals of probability}, 37(3):1044--1079, 2009.

\bibitem{auffinger201750}
A.~Auffinger, M.~Damron, and J.~Hanson.
\newblock {\em 50 years of first-passage percolation}, volume~68.
\newblock American Mathematical Soc., 2017.

\bibitem{van1993inequalities}
J.~{\SortNoop{Berg}}van~den Berg and H.~Kesten.
\newblock Inequalities for the time constant in first-passage percolation.
\newblock {\em The Annals of Applied Probability}, pages 56--80, 1993.

\bibitem{bordenave2014extinction}
C.~Bordenave.
\newblock Extinction probability and total progeny of predator-prey dynamics on
  infinite trees.
\newblock {\em Electronic Journal of Probability}, 19:1--33, 2014.

\bibitem{candellero2018coexistence}
E.~Candellero and A.~Stauffer.
\newblock Coexistence of competing first passage percolation on hyperbolic
  graphs.
\newblock {\em arXiv preprint arXiv:1810.04593}, 2018.

\bibitem{candellero2020}
E.~Candellero and A.~Stauffer.
\newblock First passage percolation in hostile environment is not monotone, in
  preparation.

\bibitem{cox1981some}
J.~T. Cox and R.~Durrett.
\newblock Some limit theorems for percolation processes with necessary and
  sufficient conditions.
\newblock {\em Annals of Probability}, 9(4):583--603, 1981.

\bibitem{dauvergne2021spread}
D.~Dauvergne and A.~Sly.
\newblock Spread of infections in a heterogeneous moving population.
\newblock {\em arXiv preprint arXiv:2105.11947}, 2021.

\bibitem{def}
M.~Deijfen and O.~H{\"a}ggstr{\"o}m.
\newblock {Nonmonotonic coexistence regions for the two-type Richardson model
  on graphs}.
\newblock {\em Electr. J. Probab.}, \textbf{11}:331--344, 2006.

\bibitem{deijfen2008pleasures}
M.~Deijfen and O.~H{\"a}ggstr{\"o}m.
\newblock The pleasures and pains of studying the two-type richardson model.
\newblock {\em Analysis and Stochastics of Growth Processes and interface
  models}, pages 39--54, 2008.

\bibitem{finn2020non}
T.~Finn and A.~Stauffer.
\newblock Non-equilibrium multi-scale analysis and coexistence in competing
  first passage percolation.
\newblock {\em arXiv preprint arXiv:2009.05463}, 2020.

\bibitem{haggstrom1998first}
O.~H{\"a}ggstr{\"o}m and R.~Pemantle.
\newblock First passage percolation and a model for competing spatial growth.
\newblock {\em Journal of Applied Probability}, 35(3):683--692, 1998.

\bibitem{kingman1973subadditive}
J.~F.~C. Kingman.
\newblock Subadditive ergodic theory.
\newblock {\em The annals of Probability}, pages 883--899, 1973.

\bibitem{kordzakhia2005escape}
G.~Kordzakhia.
\newblock The escape model on a homogeneous tree.
\newblock {\em Electronic Communications in Probability}, 10:113--124, 2005.

\bibitem{kortchemski2016predator}
I.~Kortchemski.
\newblock Predator--prey dynamics on infinite trees: A branching random walk
  approach.
\newblock {\em Journal of Theoretical Probability}, 29(3):1027--1046, 2016.

\bibitem{liggett1997domination}
T.~M. Liggett, R.~H. Schonmann, and A.~M. Stacey.
\newblock Domination by product measures.
\newblock {\em The Annals of Probability}, 25(1):71--95, 1997.

\bibitem{rich}
D.~Richardson.
\newblock {Random growth in a tessellation}.
\newblock {\em Proc. Cambridge Phil. Soc.}, \textbf{74}:515--528, 1973.

\bibitem{sidoravicius2019multi}
V.~Sidoravicius and A.~Stauffer.
\newblock Multi-particle diffusion limited aggregation.
\newblock {\em Inventiones Mathematicae}, 218(2):491--571, 2019.

\end{thebibliography}

\end{document}